\newcommand{\ZZ} {\mathbb{Z}}
\newcommand{\FF} {\mathbb{F}}
\newtheorem{lemma}{Lemma}
\newtheorem{conjecture}{Conjecture}
\newtheorem{theorem}[lemma]{Theorem}
\newtheorem{corollary}[lemma]{Corollary}
\newtheorem{proposition}[lemma]{Proposition}
\newtheorem{definition}[lemma]{Definition}
\newcommand\latticebottom{{\bf 0}}
\newcommand\lV{{\cal L}(V)}
\newcommand\card[1]{\left|\, #1 \, \right|}
\newcommand\bigcard[1]{\Bigl| \, #1 \, \Bigr| }
\newcommand\st{\,|\,}
\newcommand\colonst{\,:\,}
\DeclareMathOperator{\sgn}{sgn}
\newcommand\linspan[1]{\left\langle #1 \right\rangle}
\newcommand\upto{\mathbin{\ldotp\ldotp}}
\title{A theory of $q$-transversals}
\author{Mark Saaltink}
\date{March 14, 2025}
\begin{document}
\maketitle
\begin{abstract}
   Given an indexed family ${\cal A} = (A_1, A_2, \dotsc, A_n)$ of subsets of some given set $S$, a \emph{transversal} is a set of distinct elements $x_1, x_2, \dotsc, x_n$ with each $x_i \in A_i$.  Transversals have been studied since 1935 and have many attractive properties, with a deep connection to matroids.
A $q$-analog is formed by replacing the notion of a set by the notion of a vector space, with a corresponding replacement of other concepts.  In this paper we define a $q$-analog of the theory of transversals, and show that many of the main properties of ordinary transversals are shared by this analog.
\end{abstract}

\section{Introduction}

A $q$-analog is formed by replacing the notion of a set by the notion of a vector space, with a corresponding replacement of other concepts: cardinality becomes dimension, elements are replaced by 1-dimensional subspaces, the empty set is replaced by the 0-dimensional subspace, intersection remains the same, and union is replaced by sum.  (It is not always clear how to replace set difference or symmetric difference.) The application of this idea to matroid theory gives the still-young theory of $q$-matroids~\cite{q-matroids}.


In this paper I define a $q$-analog of a \emph{transversal}~\cite{mirsky1971}, and show that there is still a connection with matroids, as well as analogs of many of the main theorems of transversal theory.

\section{Transversal theory}

We begin with a review of standard definitions and main results of transversal theory.  More comprehensive treatments of the subject can be found in~\cite{bonin2010,mirsky1971,welsh1971,welsh-matroid-theory}.
%
We are given an indexed family ${\cal A} = (A_1, A_2, \dotsc, A_n)$ of subsets of some given set $S$.  A \emph{transversal} is a set of distinct elements $x_1, x_2, \dotsc, x_n$ with each $x_i \in A_i$.  A \emph{partial transversal} is a transversal of some subfamily (that is, containing some subset of the $A_i$).

Hall was the first to show that the obviously necessary condition is sufficient for the existence of a transversal.
It is conventional to define $A[J] = \bigcup_{j\in J} A_j$, and we will write $\{1 \upto k\}$ for the set $\{1, 2, \dotsc, k\}$.
\begin{theorem}[Hall, 1935 \cite{hall1935}] \label{th:hall}
  $\cal A$ has a transversal iff for every $J \subseteq \{1 \upto n\}$ we have
    \[ \card { A[J] } \geq \card J. \]
\end{theorem}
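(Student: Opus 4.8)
The plan is to prove the two directions separately, with essentially all of the work in the sufficiency direction. Necessity is immediate: if $x_1, x_2, \dots, x_n$ is a transversal, then for any $J \subseteq \{1 \upto n\}$ the distinct elements $(x_j)_{j \in J}$ all lie in $A[J]$, which forces $\card{A[J]} \geq \card{J}$. I will refer to the inequality $\card{A[J]} \geq \card{J}$, required for all $J$, as \emph{Hall's condition}.

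For sufficiency I would induct on $n$. The base case $n = 1$ is trivial, since Hall's condition gives $\card{A_1} \geq 1$, so any element of $A_1$ serves as a transversal. For the inductive step, assuming $\mathcal{A}$ satisfies Hall's condition, I distinguish two cases according to whether some nonempty proper $K \subseteq \{1 \upto n\}$ is \emph{critical}, meaning that $\card{A[K]} = \card{K}$.

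In the first case there is no critical proper subfamily, so $\card{A[J]} \geq \card{J} + 1$ for every nonempty proper $J$. Here I pick any $x_n \in A_n$ (nonempty by Hall's condition on $\{n\}$) and delete it from the other sets, setting $A_i' = A_i \setminus \{x_n\}$ for $i < n$. Each union loses at most one element, so for nonempty $J \subseteq \{1 \upto n-1\}$ we get $\card{A'[J]} \geq \card{A[J]} - 1 \geq \card{J}$; thus Hall's condition survives for the shorter family, whose transversal (by induction) combines with $x_n$. In the second case I fix a critical $K$. By induction the subfamily $(A_k)_{k \in K}$ has a transversal, which must exhaust $A[K]$ because $\card{A[K]} = \card{K}$. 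I then strip $A[K]$ from the remaining sets, setting $A_i'' = A_i \setminus A[K]$ for $i \notin K$, and verify that Hall's condition persists: for $J \subseteq \{1 \upto n\} \setminus K$, using $A[J \cup K] = A[J] \cup A[K]$, the identity $A''[J] = A[J \cup K] \setminus A[K]$, disjointness of $J$ and $K$, and $\card{A[K]} = \card{K}$,
\[ \card{A''[J]} = \card{A[J \cup K]} - \card{K} \geq \card{J \cup K} - \card{K} = \card{J}. \]
Induction supplies a transversal of $(A_i'')_{i \notin K}$, which is disjoint from $A[K]$ and so merges with the transversal of $(A_k)_{k \in K}$ to give one for all of $\mathcal{A}$.

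The main obstacle I expect is the bookkeeping in the second case: confirming the set identity $A''[J] = A[J \cup K] \setminus A[K]$, checking that the stripped family genuinely inherits Hall's condition, and ensuring the two partial transversals are disjoint so they can be merged. This is the only point where the inductive hypothesis is invoked twice on different smaller families, so it is where care is needed; everything else is routine.
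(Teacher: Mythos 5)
Your proof is correct, but there is nothing in the paper to compare it against: the paper states Hall's theorem as a classical result cited to Hall (1935) and defers all proofs to the references (it notes Mirsky gives three different proofs), so this theorem has no internal proof. What you have written is the standard Halmos--Vaughan argument: induction with a case split on whether some nonempty proper index set $K$ is critical ($\card{A[K]} = \card{K}$), deleting a single chosen element in the non-critical case and stripping $A[K]$ from the remaining sets in the critical case. Both cases check out, including the key identity $\card{A''[J]} = \card{A[J \cup K]} - \card{K} \geq \card{J}$ and the disjointness needed to merge the two partial transversals. Two small points of rigor you should make explicit: the induction must be \emph{strong} induction, since the critical case invokes the hypothesis for families of sizes $\card{K}$ and $n - \card{K}$, not $n-1$; and you should note (trivially) that the subfamily $(A_k)_{k \in K}$ inherits Hall's condition, since $A[J]$ for $J \subseteq K$ is the same union whether computed in the subfamily or the full family. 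Neither is a gap in the argument, only in the write-up.
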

This has a $q$-analog in \cref{th:q-Hall} below.  Hall's theorem was later generalized by Rado.

\begin{theorem}[Rado, 1967 \cite{rado1967}] \label{th:rado}
  Suppose $M$ is a matroid on the set $S$, with rank function $\rho$.
  $\cal A$ has a transversal that is independent in $M$
  iff for every $J \subseteq \{1 \upto n\}$ we have
    \[ \rho (A[J]) \geq \card J. \]
\end{theorem}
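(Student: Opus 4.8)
The forward implication is the easy half, so the real content is the converse, which I would prove by induction on $n$. For necessity, suppose $\{x_1,\dotsc,x_n\}$ is a transversal independent in $M$; then for any $J\subseteq\{1\upto n\}$ the set $\{x_j \colonst j\in J\}$ is an independent subset of $A[J]$ of size $\card J$, so $\rho(A[J])\geq\card J$. (Taking $M$ to be the free matroid on $S$, where $\rho=\card{\cdot}$, recovers the necessity half of \cref{th:hall}.) For sufficiency I would induct on $n$, the base case $n=1$ being immediate: the hypothesis $\rho(A_1)\geq 1$ says $A_1$ contains a non-loop $x_1$, and $\{x_1\}$ is the desired independent transversal.

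For the inductive step I would split on whether a proper subset is \emph{tight}, calling $J$ tight when $\rho(A[J])=\card J$. Suppose first that some nonempty proper $J\subsetneq\{1\upto n\}$ is tight. The subfamily $(A_j)_{j\in J}$ inherits the Rado condition, so by induction it has a transversal $T_1=\{x_j\colonst j\in J\}$ that is independent in $M$; since $\card{T_1}=\card J=\rho(A[J])$, this $T_1$ is a basis of $A[J]$. I would then pass to the contraction $M/T_1$ and the remaining family $(A_j)_{j\notin J}$. Because $T_1$ is a basis of $A[J]$, for every $K\subseteq\{1\upto n\}\setminus J$ we have $\rho_{M/T_1}(A[K])=\rho(A[K]\cup A[J])-\rho(A[J])=\rho(A[K\cup J])-\card J\geq\card{K\cup J}-\card J=\card K$, so the Rado condition survives the contraction. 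Induction yields a transversal $T_2$ of $(A_j)_{j\notin J}$ independent in $M/T_1$, and the standard fact that $T_2$ is independent in $M/T_1$ iff $T_1\cup T_2$ is independent in $M$ (for $T_1$ independent) forces $T_2$ disjoint from $T_1$ and shows $T_1\cup T_2$ to be an independent transversal of ${\cal A}$.

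In the remaining case no nonempty proper subset is tight, so every such $J$ satisfies the strict inequality $\rho(A[J])\geq\card J+1$. Here I would choose any non-loop $x_n\in A_n$ (one exists since $\rho(A_n)\geq 1$) and pass to $M/x_n$ with the truncated family $(A_1,\dotsc,A_{n-1})$. For nonempty $K\subseteq\{1\upto n-1\}$ the strict slack gives $\rho(A[K])\geq\card K+1$, hence $\rho_{M/x_n}(A[K])=\rho(A[K]\cup\{x_n\})-1\geq\rho(A[K])-1\geq\card K$, so the Rado condition again passes to the contraction. Induction produces a transversal $\{x_1,\dotsc,x_{n-1}\}$ independent in $M/x_n$, no member of which can equal the non-loop $x_n$ (such a member would be a loop of $M/x_n$), and adjoining $x_n$ gives the independent transversal. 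The main obstacle throughout is not any single inequality but the bookkeeping of the matroid operations: verifying that the Rado condition is correctly transported across both the restriction to $A[J]$ and the two contractions, and that independence in a contraction reassembles into genuine independence in $M$. Once the contraction rank formula and the independence-lifting lemma are in hand, both branches of the dichotomy close routinely.
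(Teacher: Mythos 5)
The paper never proves this theorem: it is quoted as a classical result of Rado, with proofs explicitly deferred to the cited literature (Mirsky, Welsh), so there is no internal proof to compare yours against. Judged on its own, your argument is correct and complete, and it is the standard proof --- the matroid analogue of the Halmos--Vaughan proof of Hall's theorem: induct on $n$, split on whether some nonempty proper index set $J$ is tight (i.e.\ $\rho(A[J]) = \card J$); in the tight case find an independent transversal of $(A_j)_{j \in J}$ by induction, note it is a basis of $A[J]$, and contract it; in the slack case pick any non-loop of $A_n$ and contract that. Both verifications that the Rado condition survives the contraction are right, and the dichotomy is exhaustive. (The other classical route, Rado's own, instead deletes elements from the $A_i$ one at a time while preserving the condition until all sets are singletons; your contraction proof is shorter given that the rank formula for $M/T$ and the independence-lifting lemma are taken as known.)

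Two cosmetic points, neither a gap. First, the ground set of $M/T_1$ is $S \setminus T_1$, so the family handed to the induction hypothesis should literally be $(A_j \setminus T_1)_{j \notin J}$; your rank computation is unaffected, since
\[ \rho_{M/T_1}\bigl(A[K] \setminus T_1\bigr) = \rho\bigl(A[K] \cup T_1\bigr) - \rho(T_1) = \rho\bigl(A[K \cup J]\bigr) - \card J, \]
using that $T_1$ spans $A[J]$, but $\rho_{M/T_1}(A[K])$ as written is not literally defined when $A[K]$ meets $T_1$. Second, disjointness of $T_2$ from $T_1$ (and of the $x_i$ from $x_n$ in the slack case) is automatic because the contraction's ground set excludes the contracted elements; it is not something the independence-lifting lemma needs to ``force.''
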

When $M$ is the free matroid on $S$, then for all $X \subseteq S$ we have $\rho(X)=\card X$, so we get Hall's theorem as a special case.
Rado's theorem does not yet have a known $q$-analog.


In the mid 1960s, the structure of the set of transversals was recognized.  The proofs of many properties of transversals become simple once this structure is shown.  This has a $q$-analog in \cref{th:transversals-are-matroids}.
\begin{theorem}[Edmonds and Fulkerson, 1965 \cite{edmonds1965}] \label{th:edmonds-fulkerson}
  The set of partial transversals of a given family are the independent sets of a matroid.
\end{theorem}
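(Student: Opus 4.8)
The plan is to verify directly that the collection $\mathcal{I}$ of partial transversals of $\mathcal{A}$ satisfies the independence axioms of a matroid on $S$: that $\emptyset \in \mathcal{I}$, that $\mathcal{I}$ is closed under taking subsets, and that $\mathcal{I}$ satisfies the augmentation (exchange) property. The first two are immediate --- $\emptyset$ is the transversal of the empty subfamily, and deleting elements from a partial transversal of a subfamily indexed by $J$ yields a partial transversal of the subfamily indexed by the surviving indices --- so essentially all of the content lies in the exchange axiom.

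The tool I would use to organise the argument is the bipartite incidence graph $G$ whose two vertex classes are $S$ and $\{1 \upto n\}$, with an edge joining $x \in S$ to $j$ exactly when $x \in A_j$. Under this correspondence a partial transversal is precisely the set of $S$-vertices saturated (covered) by some matching of $G$: a matching picks out, for each index it uses, a distinct representative element, and conversely a system of distinct representatives is a matching. This reduces the theorem to the statement that the sets of $S$-vertices coverable by a matching of a bipartite graph form a matroid, and it makes the hereditary property transparent, since restricting a matching to the edges meeting a subset $I' \subseteq I$ covers exactly $I'$.

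For the exchange axiom, suppose $I_1, I_2 \in \mathcal{I}$ with $\card{I_1} < \card{I_2}$, and let $M_1, M_2$ be matchings of $G$ covering exactly $I_1$ and $I_2$ on the $S$ side. I would examine the symmetric difference $M_1 \triangle M_2$, whose components are alternating paths and even alternating cycles. Because $M_2$ has more edges than $M_1$ while every cycle contributes equally to both, some component must be a path $P$ with more edges of $M_2$ than of $M_1$; this $P$ is an augmenting path for $M_1$, so its two endpoints are unsaturated by $M_1$, and since its length is odd they lie on opposite sides of $G$. The $S$-endpoint $x$ is then covered by $M_2$ but not by $M_1$, i.e.\ $x \in I_2 \setminus I_1$. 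Augmenting along $P$ produces a new matching $M_1 \triangle P$ whose covered $S$-set is exactly $I_1 \cup \{x\}$, since the only $S$-vertex whose saturation status changes is the endpoint $x$. Hence $I_1 \cup \{x\}$ is a partial transversal, establishing the exchange property.

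The step I expect to be the main obstacle is the augmenting-path argument in the exchange axiom --- specifically, verifying carefully that the chosen path $P$ has its $S$-endpoint in $I_2 \setminus I_1$ and that augmenting along it enlarges the covered $S$-set by exactly that single vertex. Everything else is routine once the matching reformulation is in place.
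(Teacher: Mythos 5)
Your proof is correct, but there is nothing in the paper to compare it against: the paper states this result purely as classical background (Edmonds--Fulkerson, 1965) and gives no proof, referring the reader to Mirsky and Welsh for proofs. Your argument --- encoding the family as a bipartite incidence graph, identifying partial transversals with the sets of $S$-vertices saturated by matchings, and proving the exchange axiom by choosing a component of $M_1 \triangle M_2$ containing more $M_2$-edges than $M_1$-edges and augmenting along it --- is the standard matching-theoretic proof, essentially the one found in the references the paper cites. The only compressed step is the claim that the endpoints of the chosen path $P$ are $M_1$-unsaturated: this needs the observation that an $M_1$-edge at an endpoint would either lie in $M_1 \triangle M_2$, contradicting that $P$ is a maximal (connected-component) path, or lie in $M_1 \cap M_2$, which would put two $M_2$-edges at that vertex and contradict that $M_2$ is a matching. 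You flag this step yourself, and it goes through; with that detail spelled out, the verification that the covered $S$-set of $M_1 \triangle P$ is exactly $I_1 \cup \{x\}$ is routine, since augmentation preserves coverage of every internal vertex of $P$ and changes the saturation status of no $S$-vertex other than $x$.
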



In light of \cref{th:edmonds-fulkerson} we say that a matroid $M$ is a \emph{transversal matroid} if there is some family $\cal A$ so that the $M$'s independent sets are exactly the partial transversals of $\cal A$, and $\cal A$ is then called a  \emph{presentation} of $M$.
\begin{theorem}[Various authors] \label{th:presentation-properties} \label{th:transversal-flats}
  \begin{enumerate}
  \item If matroid $M$ has a presentation, then it has a presentation with exactly $\rho(M)$ sets.
  \item If transversal matroid $M$ has no coloops, then every presentation has exactly $\rho(M)$ sets.
  \item If matroid $M$ has a presentation $\cal A$, some subfamily of $\cal A$  with exactly $\rho(M)$ sets is also a presentation of $M$.
  \item $(A_1, \dots, A_r)$ is a presentation of $M$ and $r = \rho(M)$, then for all $i$ the set $E(M) \setminus A_i$ is a flat. \label{item:transversal-flats}
  \item If $M$ is a transversal matroid, it has a unique \emph{maximal} presentation~\cite{mason1970}, where a presentation of $M$ is maximal if adding any element to any of its sets gives a family that is \emph{not} a presentation of $M$.
  \end{enumerate}
\end{theorem}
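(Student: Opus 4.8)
The common engine for all five parts is the bipartite model of a presentation $\mathcal A = (A_1,\dots,A_n)$: form the bipartite graph $G$ on index set $\{1\upto n\}$ and ground set $E = E(M)$ with $i$ joined to $x$ exactly when $x \in A_i$. Partial transversals are the matchable subsets of $E$, so for every $X \subseteq E$ the rank $\rho(X)$ equals the largest matching saturating a subset of $X$; by K\"onig's theorem this equals the minimum vertex cover of the induced subgraph, and the defect form gives $\rho(E) = n - \max_{J}\bigl(\card{J} - \card{A[J]}\bigr)$. Since a matroid is determined by its rank function, to decide whether two families present the same matroid I only need to compare these matching numbers. For parts 1 and 3 I would prove one reduction lemma, namely that if $n > r := \rho(M)$ then some $A_i$ may be deleted without changing $M$, and then iterate it down to $r$ sets; because iteration only deletes sets, the result is a subfamily of $\mathcal A$ (part 3), and part 1 is then the weaker existence statement. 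To locate the deletable set, take a minimum vertex cover $(C_S, C_E)$ with $\card{C_S} + \card{C_E} = r$; since $n > r \geq \card{C_S}$ there is an index $i \notin C_S$, and then every neighbour of $i$ lies in $C_E$, so $A_i \subseteq C_E$. The content is that deleting this $A_i$ lowers no rank: deletion can only lower ranks, and for the reverse inequality one takes a maximum partial transversal of an arbitrary $X$ and, whenever it uses index $i$, reroutes along an alternating path off $i$ using that $i$ is outside the cover. This rerouting is the main obstacle of this part.

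Part 4 is the cleanest and I would prove it directly. Fix a presentation $(A_1,\dots,A_r)$ with exactly $r = \rho(M)$ sets and set $F = E \setminus A_i$. Inside $F$ the index $i$ is useless, since every element that could represent it lies in $A_i$ and hence outside $F$; thus any partial transversal contained in $F$ uses only the remaining $r-1$ indices, so $\rho(F) \leq r-1$. Now take any $x \in A_i = E \setminus F$ and a maximum partial transversal $T \subseteq F$: as $T$ avoids index $i$, the assignment $x \mapsto i$ extends it to a partial transversal $T \cup \{x\}$ of $F \cup \{x\}$, whence $\rho(F \cup \{x\}) = \rho(F) + 1 > \rho(F)$. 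Therefore no element outside $F$ lies in $\mathrm{cl}(F)$, so $F = E \setminus A_i$ is a flat.

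Part 2 I would prove in contrapositive form: a presentation by $n > r$ nonempty sets (empty sets being excluded by convention, as they may be adjoined freely) forces a coloop. The defect formula gives $\max_J\bigl(\card{J} - \card{A[J]}\bigr) = n - r \geq 1$, and since $J \mapsto \card{A[J]}$ is submodular the function $J \mapsto \card{J} - \card{A[J]}$ is supermodular, so its maximisers are closed under union and a unique maximal maximiser $J^*$ exists. The deficiency of $J^*$ is large enough that within the subfamily indexed by $J^*$ every element of $A[J^*]$ is forced into every basis; isolating one such forced element yields the required coloop. Making the ``forced into every basis'' step precise is the delicate point here.

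Part 5 (Mason) I would approach by defining, for a presentation of the minimum size $r$, the enlargement $\bar A_i$ to be the complement of the smallest flat that can serve as $E\setminus A_i$ in part 4; part 4 guarantees each $E \setminus A_i$ is already a flat, and the candidate maximal presentation replaces each $A_i$ by $\bar A_i$. Two points then remain: that these individual enlargements can be carried out simultaneously and still present $M$, and that the outcome is independent of the starting presentation, which gives uniqueness. The simultaneous-enlargement step is the heart of Mason's theorem and, together with the rerouting lemma behind parts 1 and 3, is where I expect the genuine work to lie; a convention fixing the number of sets at $r$ is needed for the uniqueness statement, as the examples with coloops show.
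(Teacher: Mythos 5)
First, note that the paper never proves this theorem: it is quoted as background attributed to ``various authors,'' with proofs delegated to the cited references, so there is no internal proof to compare against and your proposal must stand on its own. Parts of it do. Your proof of part 4 is correct and complete (and in fact never uses the hypothesis $r = \rho(M)$: any matching of an independent $T \subseteq E \setminus A_i$ automatically avoids index $i$, so $x \mapsto i$ extends it for any $x \in A_i$). Your outline for part 2 is also sound, and the step you call delicate can be filled by counting: if $J^*$ is the maximal deficiency maximizer, then $\card{A[J^*]} = \card{J^*} - (n-r)$, and any basis $B$, having only $n - \card{J^*}$ indices available outside $J^*$, satisfies $\card{B \cap A[J^*]} \geq r - (n - \card{J^*}) = \card{A[J^*]}$, hence $A[J^*] \subseteq B$; since $J^* \neq \emptyset$ and the sets are nonempty, every element of $A[J^*]$ is a coloop.

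The genuine error is in the reduction lemma driving parts 1 and 3: an index outside the index-side of \emph{some} minimum vertex cover need not be deletable. Take $E = \{a,b\}$, $A_1 = A_2 = \{a\}$, $A_3 = \{a,b\}$, so that $M$ is the free matroid of rank $2$ and $n = 3 > r$. Then $C_S = \emptyset$, $C_E = \{a,b\}$ is a minimum vertex cover, $3 \notin C_S$ and $A_3 \subseteq C_E$, yet deleting $A_3$ makes $b$ a loop and changes $M$. Your rerouting step is not an obstacle to be overcome; it is impossible here, since in the matching $a \mapsto 1$, $b \mapsto 3$ the element $b$ has no neighbour other than index $3$. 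The correct criterion is that $i$ lie in \emph{every} deficiency maximizer (equivalently, outside $C_S$ for \emph{every} minimum cover); such an $i$ exists whenever $n > r$ because, by the supermodularity you invoke in part 2, the maximizers are closed under intersection and the minimal maximizer is nonempty. Even with the right choice of $i$, rerouting matchings for every subset $X$ is unnecessary: it is cleaner to argue as the paper does in its $q$-analog (\cref{th:union-same-rank}, used the same way in \cref{sec:q-presentations}), namely that $M$ is a union of rank-$1$ matroids and a summand whose removal preserves the rank of $E$ can be removed without changing the matroid at all, so only the single value $\rho(E)$ needs checking. Finally, part 5 is a plan rather than a proof: the two steps you correctly identify as the heart of Mason's theorem (simultaneous enlargement, and independence from the starting presentation) are left entirely undone.
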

$q$-analogs of these properties appear in \cref{sec:q-presentations}.

Transversal matroids belong to the class of representable matroids.
\begin{theorem}[Piff and Welsh\cite{piffwelsh1970}] \label{th:transversals-rep}
  Transversal matroids are representable over all sufficiently large fields.  Thus they are representable over fields of every characteristic.
\end{theorem}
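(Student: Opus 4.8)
The plan is to realize $M$ by an explicit matrix built from a presentation, using the classical ``generic incidence matrix'' construction. First I would invoke \cref{th:presentation-properties} to obtain a presentation $\mathcal{A} = (A_1, \dotsc, A_r)$ of $M$ with exactly $r = \rho(M)$ sets, on ground set $S = E(M)$. Over a base field $\FF$, I would form the $r \times \card{S}$ matrix $N$ with rows indexed by $\{1 \upto r\}$ and columns indexed by $S$, placing a distinct indeterminate $x_{i,s}$ in row $i$ and column $s$ whenever $s \in A_i$, and $0$ otherwise. Regarding the columns of $N$ as vectors over the rational function field $K = \FF(x_{i,s} : s \in A_i)$, the claim is that $N$ represents $M$ over $K$.

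The combinatorial heart of the argument, and the step I expect to be the main obstacle, is the lemma that a subset $T \subseteq S$ is a partial transversal of $\mathcal{A}$ if and only if the corresponding columns of $N$ are linearly independent over $K$. To prove it, note that for $\card{T} = k$ the columns indexed by $T$ are independent iff some $k \times k$ minor, using $T$ and some choice of $k$ rows $R$, is a nonzero polynomial. Expanding such a minor as $\sum_{\pi} \sgn(\pi) \prod_{i \in R} N_{i,\pi(i)}$ over bijections $\pi : R \to T$, a term is nonzero exactly when every cell $(i,\pi(i))$ holds an indeterminate, i.e.\ when $\pi$ realizes a system of distinct representatives (a matching in the bipartite incidence graph of $\mathcal{A}$). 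Because each $x_{i,s}$ occupies a single cell, distinct matchings contribute distinct monomials, which are linearly independent over $\FF$ and so cannot cancel; hence the minor is nonzero iff such a matching exists. Ranging over choices of $R$, some minor is nonzero iff $T$ is a partial transversal. With \cref{th:edmonds-fulkerson}, this identifies the independent sets of the matroid represented by $N$ over $K$ with the partial transversals of $\mathcal{A}$, namely the independent sets of $M$.

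Finally I would specialize the indeterminates into an actual field rather than a function field. For each basis $B$ of $M$ we have $\card{B} = r$, and by the lemma the $r \times r$ minor $D_B$ on the columns $B$ is a nonzero polynomial; hence the product $P = \prod_B D_B$ over all bases is a nonzero polynomial in finitely many variables. If $\FF$ is large enough, then by a standard non-vanishing argument (it suffices that $\card{\FF}$ exceed the degree of $P$ in each variable) there is an assignment of field values to the $x_{i,s}$ at which $P$, and therefore every $D_B$, is nonzero. Under this specialization each basis of $M$ still spans a space of full rank $r$, so every independent set of $M$, being contained in a basis, remains independent; conversely any set that is dependent in $M$ has all its relevant minors identically zero as polynomials by the lemma, so it stays dependent. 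Thus $N$ represents $M$ over $\FF$. Since every characteristic admits arbitrarily large fields, $M$ is representable in every characteristic.
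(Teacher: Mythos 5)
Your proposal is correct, but note that the paper itself gives no proof of \cref{th:transversals-rep}: it is quoted as background from Piff and Welsh, with proofs deferred to the cited references, so there is no in-paper argument to compare against. Your construction is the standard classical one and all steps check out: reducing to a presentation with $\rho(M)$ sets via \cref{th:presentation-properties}; the minor-expansion lemma (a bijection $\pi$ contributes a nonzero term exactly when it is a matching, and distinct matchings give distinct square-free monomials, so no cancellation occurs), which identifies independent column sets over the function field with partial transversals; and the specialization step, where the product of basis minors is a nonzero polynomial of degree at most the number of bases in each variable, so any sufficiently large field admits a non-vanishing point, and independence/dependence of all sets is preserved because independent sets sit inside bases while dependent sets have all their minors identically zero as polynomials. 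It is worth observing that your technique is essentially the one the paper does use for its $q$-analog, \cref{th:q-rep-aligned}: there the generic entries must be taken as powers $y_i^j$ of a single indeterminate per row, so that the row vector $g_i$ has kernel exactly the subspace $X_i$, and non-cancellation of independent monomials plays the same role as in your argument; your cell-by-cell indeterminates are the simpler classical ancestor of that device, and the extra rigidity in the $q$-case (one linear functional must cut out all of $X_i$ at once) is precisely why the paper's analog is restricted to aligned subspaces.
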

This has a $q$-analog in a special case in \cref{th:q-rep-aligned}, and we conjecture in
\cref{sec:rep} that a more general analog holds.

Transversal matroids are a special type of matroid union.  This has a $q$-analog in \cref{thm:q-matroids-union-rank-1}.
\begin{theorem}[Mirsky \cite{mirsky1971}] \label{thm:matroids-union-rank-1}
A matroid is a transversal matroid iff it is the union of matroids of rank 1.
\end{theorem}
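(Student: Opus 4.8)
The plan is to reduce the entire statement to a single set-level identity and then invoke the matroid facts already available. I would first record the elementary structure of a rank-$1$ matroid: such a matroid $M_i$ on $S$ is completely determined by its set $A_i$ of non-loops, its independent sets being $\emptyset$ together with the singletons $\{x\}$ for $x \in A_i$. (A matroid of rank $0$, i.e. with $A_i = \emptyset$, may be discarded, since it changes neither the union nor the associated family.) To a list of rank-$1$ matroids $M_1, \dots, M_n$ I thus associate the family ${\cal A} = (A_1, \dots, A_n)$ of their non-loop sets, and conversely. The key claim to establish is that a set $X \subseteq S$ is independent in the union $M_1 \vee \cdots \vee M_n$ if and only if $X$ is a partial transversal of ${\cal A}$. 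By the matroid union theorem the independent sets of the union are exactly the sets $X = I_1 \cup \cdots \cup I_n$ with each $I_i$ independent in $M_i$, and since each $M_i$ has rank $1$, each $I_i$ is either empty or a singleton contained in $A_i$.

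For the forward half of the claim, I would suppose $X = \bigcup_i I_i$ is such a union and, for each $x \in X$, choose an index $i(x)$ with $x \in I_{i(x)}$. Because $\card{I_i} \le 1$, the assignment $x \mapsto i(x)$ is injective (if $i(x) = i(x')$ then $x$ and $x'$ lie in the same singleton, forcing $x = x'$), and $x \in I_{i(x)} \subseteq A_{i(x)}$; hence $X$ is a system of distinct representatives for the subfamily indexed by $i(X)$, that is, a partial transversal. Conversely, a partial transversal $\{x_j : j \in J\}$ with distinct $x_j \in A_j$ is realised by setting $I_j = \{x_j\}$ for $j \in J$ and $I_i = \emptyset$ otherwise, so it is independent in the union. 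This would prove the claim.

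The theorem then follows in both directions. If $M$ is a union of rank-$1$ matroids, the claim identifies its independent sets with the partial transversals of ${\cal A}$, so $M$ is a transversal matroid with presentation ${\cal A}$. Conversely, if $M$ is transversal with presentation ${\cal A} = (A_1, \dots, A_n)$, I would let $M_i$ be the rank-$1$ matroid on $S$ whose non-loops are $A_i$; then $M$ and $M_1 \vee \cdots \vee M_n$ are both matroids (the latter by the matroid union theorem, the former by \cref{th:edmonds-fulkerson}) having the same independent sets, hence are equal. The only ingredient beyond elementary bookkeeping is the matroid union theorem, which is in any case needed to give meaning to ``union'' in the statement; I expect the sole genuinely substantive point to be the injectivity step above, since that is exactly where rank $1$ is used to convert a union of independent sets into a choice of \emph{distinct} indices, i.e. into the distinctness condition defining a transversal.
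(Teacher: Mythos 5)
Your proof is correct. Note that the paper itself does not prove this theorem at all --- it is stated as a classical result and deferred to Mirsky and Welsh --- so the relevant comparison is with the paper's proof of the $q$-analog (\cref{th:transversals-are-matroids} and \cref{thm:q-matroids-union-rank-1}), and there the routes genuinely differ. You define the union via the Nash--Williams characterization (a set is independent in $M_1 \vee \cdots \vee M_n$ iff it is a union $I_1 \cup \cdots \cup I_n$ of sets independent in the respective $M_i$), and then the whole theorem reduces to the bookkeeping you describe, with rank $1$ supplying injectivity; Hall's theorem is never needed. The paper instead defines union by inducing a matroid from the submodular function $f = r_1 + \cdots + r_n$, identifies independence with the condition $f(Y) \geq \dim Y$ for all $Y$ below the given space, and then needs Hall's theorem (\cref{th:hall}) to translate that numerical condition back into an actual system of distinct representatives. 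In the classical setting these two definitions of union agree --- that equivalence is the substance of the matroid union theorem you invoke --- so your argument is a clean and arguably more transparent proof of Mirsky's theorem. What the paper's heavier route buys is exactly $q$-generalizability: for $q$-matroids an independent subspace of a union need not decompose as a join of subspaces independent in the summands, so the decomposition characterization you rely on is unavailable, and the rank-function-plus-Hall argument is the one that survives the $q$-analogy. One last pedantic point: your parenthetical about discarding rank-$0$ matroids is doing real work in the converse direction (a presentation may contain empty sets, which cannot be non-loop sets of rank-$1$ matroids), and the wholly degenerate case where every $A_i$ is empty leaves the loop matroid, which is a union of rank-$1$ matroids only under the empty-union convention; this convention issue afflicts the theorem statement itself, not your argument.
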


Proofs of all these theorems and more can be found in~\cite{mirsky1971,welsh1971,welsh-matroid-theory}.  In particular, Mirsky~\cite{mirsky1971} has three different proofs of Hall's theorem and gives many of its variants.  A fairly modern survey of transversal theory can be found in an unpublished article by Bonin~\cite{bonin2010}.


As a consequence of Hall's theorem, we have an alternative test for being a transversal.  This has a $q$-analog in \cref{th:is-q-transversal}.
\begin{corollary} \label{th:is-transversal}
  A set $T$ with $\card T = n$ is a transversal of family $\cal A$ iff for every $J \subseteq \{1 \upto n\}$ we have
  \[ \left| T \cap  A[J] \right| \geq \card J. \]
\end{corollary}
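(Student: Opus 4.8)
The plan is to reduce the statement to Hall's theorem (\cref{th:hall}), applied not to $\mathcal{A}$ itself but to the family restricted to $T$. Set $\mathcal{A}' = (A_1 \cap T, \dotsc, A_n \cap T)$. The central claim I would establish is that $T$ is a transversal of $\mathcal{A}$ if and only if $\mathcal{A}'$ has a transversal.

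For the forward direction of this claim, if $T = \{x_1, \dotsc, x_n\}$ with distinct $x_i \in A_i$, then each $x_i$ lies in $A_i \cap T$, so the same choice of representatives is a transversal of $\mathcal{A}'$. For the reverse direction, any transversal of $\mathcal{A}'$ consists of $n$ distinct elements, each drawn from some $A_i \cap T \subseteq T$; since $\card T = n$, a set of $n$ distinct elements of $T$ must be all of $T$, and because its representatives satisfy $x_i \in A_i$, the set $T$ is a transversal of $\mathcal{A}$. This reverse implication is the one step I would treat carefully: it is exactly here that the hypothesis $\card T = n$ is indispensable, for without it one could only deduce that $T$ \emph{contains} a transversal rather than being one.

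With the equivalence in place, I would apply \cref{th:hall} to $\mathcal{A}'$: it has a transversal iff $\card{A'[J]} \geq \card J$ for every $J \subseteq \{1 \upto n\}$. Distributing the intersection over the union gives $A'[J] = \bigcup_{j \in J}(A_j \cap T) = \left(\bigcup_{j \in J} A_j\right) \cap T = A[J] \cap T$, so Hall's condition for $\mathcal{A}'$ reads precisely $\card{T \cap A[J]} \geq \card J$, which is the stated criterion. The rest is routine rewriting, so the corollary follows.
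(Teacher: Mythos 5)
Your proof is correct and takes essentially the same route as the paper's: both form the restricted family with sets $A_i \cap T$ and apply Hall's theorem (\cref{th:hall}), using $\card T = n$ to conclude that a transversal of the restricted family must be all of $T$. Your write-up is somewhat more explicit (notably in isolating the equivalence and the identity $A'[J] = T \cap A[J]$), but the underlying argument is identical.
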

\begin{proof} The condition is clearly necessary.  So show it sufficient,
  form a family $\cal B$ with $B_i = T \cap A_i$. Then Hall's theorem shows that $\cal B$ has a transversal.  All the elements of that must be elements of $T$, and as both the transversal and $T$ have $n$ elements, they are the same.
\end{proof}
This test requires checking $2^n$ sets $J$, and so might be better than checking $n!$ different injections from $\{1 \upto n\}$ to $T$.  
This theorem has a $q$-analog in \cref{th:is-q-transversal}.


The class of transversal matroids is not minor-closed; in particular a contraction of a transversal matroid need not be transversal.  The larger class of \emph{gammoids} is the smallest minor-closed class containing all transversal matroids; details can be found in \cite{bonin2010}, \cite{oxley-matroid-theory},  or
\cite{welsh-matroid-theory}.

\subsection{A cryptomorphism} \label{sec:avoidance}

It turns out that a different formulation of transversal theory, while equivalent to the standard one, has a closer similarity to the $q$-analog.  Here we will give that formulation, which considers the complements of the sets $A_i$, and restate the main theorems in this setting.

\begin{definition}
 Given an indexed family ${\cal X} = (X_1, X_2, \dotsc, X_n)$ of subsets of some given set $S$, an \emph{avoiding transversal of $\cal X$} is a set of elements $x_1, x_2, \dotsc, x_n$ of $S$, all distinct, with each $x_i \notin X_i$.  A \emph{partial avoiding transversal of $\cal X$} is an avoiding transversal of some subfamily (that is, containing some subset of the $X_i$).
\end{definition}

Clearly, when $A_i = S \setminus X_i$, an avoiding transversal of $\cal X$ is just a transversal of $\cal A$.
From \cref{th:edmonds-fulkerson} it is clear that the set of avoiding transversals of a family form the independent sets of a matroid.  Similar to the definition of $A[J]$, it is useful to
define \[ {\cal X}(J) =  \bigcap_{j\in J} X_j \] for any $J \subseteq \{1 \upto n\}$, with the convention ${\cal X}(\emptyset) = S$.


Hall's theorem has a simple reformulation
\begin{corollary}\label{th:avoid-hall}
 $\cal X$ has an avoiding transversal iff for every $J \subseteq \{1 \upto n\}$ we have
   \[ |{\cal X}(J)| + \card J \leq \card S. \]

\end{corollary}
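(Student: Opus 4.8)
The plan is to reduce this directly to Hall's theorem (\cref{th:hall}) via the complementation that the surrounding text has already set up. First I would introduce the family ${\cal A} = (A_1, \dotsc, A_n)$ with $A_i = S \setminus X_i$. As already observed, an avoiding transversal of $\cal X$ is precisely a transversal of $\cal A$, so $\cal X$ has an avoiding transversal if and only if $\cal A$ has a transversal. By \cref{th:hall}, the latter holds if and only if $\card{A[J]} \geq \card J$ for every $J \subseteq \{1 \upto n\}$.

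It then remains only to rewrite Hall's inequality in terms of the intersections ${\cal X}(J)$. The key identity is De Morgan's law: for any $J$,
\[ A[J] = \bigcup_{j \in J} (S \setminus X_j) = S \setminus \bigcap_{j \in J} X_j = S \setminus {\cal X}(J). \]
Taking cardinalities in the (implicitly finite) set $S$ gives $\card{A[J]} = \card S - \card{{\cal X}(J)}$, so Hall's condition $\card{A[J]} \geq \card J$ is equivalent to $\card{{\cal X}(J)} + \card J \leq \card S$, which is exactly the stated criterion. Running this equivalence through both directions of \cref{th:hall} yields the corollary.

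There is no real obstacle here; the argument is a routine translation, and the only points deserving a word of care are the boundary conventions. For $J = \emptyset$ the union $A[\emptyset]$ is empty while ${\cal X}(\emptyset) = S$ by the stated convention, and the two sides of the identity indeed agree since $\emptyset = S \setminus S$; the corresponding inequality $\card S + 0 \leq \card S$ holds trivially, matching Hall's vacuous case $0 \geq 0$. I would also make explicit that the whole argument lives in the finite setting in which cardinalities may be subtracted, which is the ambient assumption throughout.
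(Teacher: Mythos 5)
Your proof is correct and matches the paper's intended argument exactly: the paper states this corollary without proof, treating it as the immediate reformulation of Hall's theorem (\cref{th:hall}) via the complementation $A_i = S \setminus X_i$ and De Morgan's law that the surrounding text sets up, which is precisely what you wrote out. Your explicit attention to the $J = \emptyset$ convention and the finiteness of $S$ is a reasonable bit of extra care, not a deviation.
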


For the statement of Rado's theorem in this setting, it is convenient to recall first the \emph{nullity} function for a matroid $M$ with rank function $\rho$,
\[ \nu(A) = \card A - \rho(A), \]
and recall an identity for rank in a dual matroid,
\[ \rho^*(S \setminus X) = \nu(S) - \nu(X) \text{~and dually~} \rho(S \setminus X) = \nu^*(S) - \nu^*(X). \]
Now if each $X_i = S \setminus A_i$, we have
\[ \rho\left( A[J] \right)
= \rho\left( S \setminus {\cal X}(J) \right)
= \nu^* (S) - \nu^*\left( {\cal X}(J) \right). \]
Co-nullity is not as mysterious as it may first appear.  If $M$ is a matroid on $S$ with rank function $\rho$ and $X \subseteq S$, then we have
\begin{align*}
  \rho(X) &= \max_{B \in \cal B} \card {X \cap B}, \text{~where $\cal B$ is the set of bases of $M$, and so} \\
  \nu(X) &= \min_{B \in \cal B} \card {X \setminus B}.
\end{align*}
Since the bases of $M^*$ are the sets $S \setminus B$, we have
\begin{equation} \label{eq:nu*}
  \nu^*(X) = \min_{B \in \cal B} \card {X \setminus (S \setminus B)}
  = \min_{B \in \cal B} \card {X \cap B}.
\end{equation}
So $\rho$ and $\nu^*$ have a pleasing similarity, replacing max by min.

We can recast Rado's theorem using co-nullity:
\begin{corollary} \label{th:avoid-rado}
  Suppose $M$ is a matroid on the set $S$, with co-nullity function $\nu^*$.
  Then $\cal X$ has an avoiding transversal that is independent in $M$
  iff for every $J \subseteq \{1 \upto n\}$ we have
  \[ \nu^*\left( {\cal X}(J) \right)  + \card J \leq \nu^* (S). \]
\end{corollary}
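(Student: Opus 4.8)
The plan is to reduce the statement directly to Rado's theorem (\cref{th:rado}) by passing to complements, exactly as the preceding discussion sets up. First I would set $A_i = S \setminus X_i$ for each $i$, forming a family $\cal A$ of subsets of $S$. As already observed, an avoiding transversal of $\cal X$ is precisely a transversal of $\cal A$; since both notions refer to the same sets of distinct elements of $S$, the property of being independent in $M$ carries over verbatim. Hence $\cal X$ has an avoiding transversal independent in $M$ iff $\cal A$ has a transversal independent in $M$.

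Next I would invoke Rado's theorem for $\cal A$: the latter condition holds iff $\rho(A[J]) \geq \card J$ for every $J \subseteq \{1 \upto n\}$. It then remains only to rewrite $\rho(A[J])$ in terms of $\nu^*$. By De Morgan, $A[J] = \bigcup_{j \in J} (S \setminus X_j) = S \setminus {\cal X}(J)$, and the duality identity recalled just before the statement gives $\rho(A[J]) = \rho(S \setminus {\cal X}(J)) = \nu^*(S) - \nu^*({\cal X}(J))$. Substituting this into Rado's inequality turns $\rho(A[J]) \geq \card J$ into $\nu^*(S) - \nu^*({\cal X}(J)) \geq \card J$, which rearranges to the desired $\nu^*({\cal X}(J)) + \card J \leq \nu^*(S)$. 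Since $J$ ranges over all subsets in both formulations, the two conditions are equivalent and the corollary follows.

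Because every ingredient---the complementation dictionary, the De Morgan identity, and the rank/co-nullity duality---is already in place, I do not expect any genuine obstacle; the proof is essentially a bookkeeping translation of Rado's theorem. The only point demanding a moment of care is confirming that the duality identity $\rho(S \setminus X) = \nu^*(S) - \nu^*(X)$ is applied faithfully for \emph{every} $J$, including the boundary cases. For $J = \emptyset$ we have ${\cal X}(\emptyset) = S$, so the co-nullity condition reads $\nu^*(S) \leq \nu^*(S)$, matching Rado's trivially-satisfied $\rho(\emptyset) \geq 0$; verifying this consistency at the endpoints is enough to be sure the rewriting introduces no off-by-one error in the quantified statement.
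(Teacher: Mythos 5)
Your proof is correct and matches the paper's intended argument exactly: the paper states this as a corollary precisely because the complementation dictionary $A_i = S \setminus X_i$, the De Morgan identity $A[J] = S \setminus {\cal X}(J)$, and the duality $\rho(S \setminus X) = \nu^*(S) - \nu^*(X)$ are set up immediately beforehand, so that Rado's theorem translates line-for-line into the stated inequality. Your check of the $J = \emptyset$ boundary case is a sensible extra verification but introduces nothing beyond the paper's implicit proof.
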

When $M$ is the free matroid, the only basis is $S$ and co-nullity coincides with cardinality, so this version of Rado's theorem implies the avoidance version of Hall's theorem.

\cref{th:is-transversal} can also be expressed in the avoidance form, although it is fact easy to express a strengthened version.  This theorem has a $q$-analog in \cref{th:is-q-transversal}.
\begin{theorem} \label{th:is-avoiding-transversal}
  A set $T$ is a partial avoiding transversal of family ${\cal X} = (X_1, \dotsc, X_n)$ iff for every $J \subseteq \{1 \upto n\}$ we have
    \[ | T \cap {\cal X}(J) | + \card J \leq n. \]
\end{theorem}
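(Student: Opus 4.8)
The plan is to prove necessity by a direct counting argument and to prove sufficiency by reducing to the full-transversal test already established in \cref{th:is-transversal}, using the complement translation $A_i = S \setminus X_i$ together with a padding trick that converts a partial avoiding transversal into a full one.

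For necessity, suppose $T$ is a partial avoiding transversal, so there is an injection $\phi \colon T \to \{1 \upto n\}$ with $x \notin X_{\phi(x)}$ for every $x \in T$. Fix $J \subseteq \{1 \upto n\}$. If $x \in T \cap {\cal X}(J)$ then $x \in X_j$ for all $j \in J$, so $\phi(x) \in J$ would force the contradiction $x \in X_{\phi(x)}$; hence $\phi(x) \notin J$. Thus $\phi$ maps $T \cap {\cal X}(J)$ injectively into $\{1 \upto n\} \setminus J$, giving $\card{T \cap {\cal X}(J)} \leq n - \card J$, which is the claimed inequality. This direction needs neither padding nor Hall's theorem.

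For sufficiency I would first take $J = \emptyset$: since ${\cal X}(\emptyset) = S$, the hypothesis yields $\card T \leq n$. Setting $m = n - \card T \geq 0$, enlarge $S$ to $S' = S \cup \{z_1, \dotsc, z_m\}$ with $m$ new elements lying in no $X_i$, and let $T' = T \cup \{z_1, \dotsc, z_m\}$, so $\card{T'} = n$. The key observation is that $T$ is a partial avoiding transversal of $\cal X$ iff $T'$ is a full avoiding transversal of the same family over $S'$: an avoidance assignment for $T$ on a subfamily indexed by $K$ extends by sending the $m$ free elements to the $m$ indices of $\{1 \upto n\} \setminus K$, which they avoid automatically, and conversely restriction of a full assignment to $T$ gives a partial avoiding transversal. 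Now I would invoke \cref{th:is-transversal} for $T'$, translated through $A_i = S' \setminus X_i$: for $J \neq \emptyset$ we get $A'[J] = S' \setminus {\cal X}(J)$ and hence $\card{T' \cap A'[J]} = n - \card{T' \cap {\cal X}(J)}$, so the transversal test $\card{T' \cap A'[J]} \geq \card J$ becomes $\card{T' \cap {\cal X}(J)} + \card J \leq n$. Because the new elements lie in no $X_i$, we have ${\cal X}(J) \subseteq S$ and therefore $T' \cap {\cal X}(J) = T \cap {\cal X}(J)$, so this is exactly the hypothesis on $T$; the case $J = \emptyset$ holds trivially for $T'$. Thus the stated condition is precisely what makes $T'$ a full avoiding transversal, completing the reduction.

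The delicate points I would check most carefully are all in the padding step: that the free elements can always be matched to the unused indices, that enlarging $S$ leaves ${\cal X}(J)$ unchanged for nonempty $J$ so that the inequalities genuinely correspond, and the separate role of $J = \emptyset$, which supplies the bound $\card T \leq n$ needed even to set up the construction. The main obstacle is thus organizing this reduction cleanly; its payoff is that we may borrow \cref{th:is-transversal} directly rather than reprove a deficiency version of Hall's theorem.
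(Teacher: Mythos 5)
Your proposal is correct and takes essentially the same route as the paper: pad $T$ with $n-\card{T}$ new elements lying in no $X_i$ to get a set $T'$ of size $n$, translate avoidance into ordinary transversality by complementation, and invoke Hall-type machinery --- the paper applies Hall's theorem (\cref{th:hall}) directly to the family $A_i = T' \setminus X_i$, which is exactly what your appeal to \cref{th:is-transversal} with $A_i = S' \setminus X_i$ unwinds to, since that corollary internally intersects each $A_i$ with $T'$. Your separate counting argument for necessity is a nice but redundant addition, as both directions already follow from the equivalence chain in your sufficiency reduction, just as in the paper's proof.
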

\begin{proof}
  If $\card T > n$ then $T$ cannot be a transversal, and with $J = \emptyset$ the condition is falsified.  So the theorem holds in that case.  Now assume $\card T \leq n$.
  Let $U$ be a set of size $n - \card T$ that is disjoint from $T$ and all the $X_i$, put $T' = T \cup U$, and let $A_i = T' \setminus X_i$ for $i \in \{1 \upto n\}$.  Now $T$ is a partial avoiding transversal of $\cal X$ iff $T'$ is a transversal of $(A_1, A_2, \dotsc)$.
Then for all $J \subseteq  \{1 \upto n\}$ we have
  \[ \card{A[J]} = \Bigl| \bigcup_{j\in J} (T' \setminus X_j) \Bigr| =
  \Bigl| T' \setminus \bigcap_{j\in J} X_j \Bigr| = n - |T \cap {\cal X}(J)|, \]
  so that Hall's theorem completes the proof.
\end{proof}

The properties of \cref{th:presentation-properties} of course hold, with a slight recasting to refer to an ``avoiding presentation'', and to refer to a minimal avoiding presentation, rather than maximal presentation.  Most of these properties have $q$-analogs in \cref{sec:q-presentations}. 

\section{$q$-matroids}

Just as transversals are connected with matroids, $q$-transversals will be connected to $q$-matroids.  In this section we review the basic definitions of these matroids, including just enough of the theory for our needs.
While there are many equivalent definitions of $q$-matroids~\cite{q-alternatives,q-matroids}, the simplest uses the rank function.  When $V$ is a vector space, we let $\lV$ be the lattice of subspaces of $V$.
A \emph{$q$-matroid} is determined by a vector space $V$ and a function $r: \lV  \rightarrow \ZZ$,
satisfying, for all $A, B \in \lV$,
\begin{itemize}
  \item $0 \leq r(A) \leq \dim A$ (rank is non-negative and bounded by dimension),
  \item if $A \leq B$ then $r(A) \leq r(B)$ (rank is non-decreasing), and
  \item $r(A \vee B) + r(A \wedge B) \leq r(A) + r(B)$ (rank is submodular).
\end{itemize}
This definition is a direct $q$-analog of the definition of an ordinary matroid in terms of its rank function.
Here we use lattice notations: $A \leq B$ holds if $A$ is a subspace of $B$, $A \vee B$ is the smallest subspace containing both
$A$ and $B$, or equivalently the space spanned by their union, and $A \wedge B = A \cap B$ is the largest space contained in both $A$ and $B$.
We also write $\latticebottom$ for the bottom element of $\lV$, that is, the subspace of dimension 0.
If $X$ a set of vectors, we will write $\linspan X$ for their linear span.

The usual concepts of matroid theory apply equally well to $q$-matroids: a subspace $A$ is \emph{independent} if $r(A) = \dim A$, and \emph{dependent} otherwise; a subspace $A$ is a \emph{circuit} if it is dependent but all its proper subspaces are independent; the \emph{closure} of $A$ is the largest subspace $B$ with $A \leq B \leq V$ and $r(B) = r(A)$; and $A$ is \emph{spanning} if its closure is $V$.  Axiom systems for $q$-matroids in terms of independent sets, bases, or circuits can be found in \cite{q-alternatives}.

If $A$ and $B$ are subspaces, and $r(A) = r(B) = 0$, then by submodularity we also have $r(A \vee B) = 0$.   So there is a largest subspace $L \leq V$ with $r(L) = 0$, called the \emph{loop space} of the $q$-matroid.  The loop space is the closure of the 0-dimensional subspace at the bottom of the lattice.

Matroids of rank 1 figure prominently in transversal theory.  A $q$-matroid of rank 1 is completely characterized by its
loop space: if $M$ is a $q$-matroid of rank 1 with loop space $L$ then for any $A \leq V$ we have
\[ r(A) = \begin{cases}
    0 & \text{if $A \leq L$}, \\
    1 & \text{otherwise.}
\end{cases} \]

As for ordinary matroids, we can define the \emph{nullity} function for a $q$-matroid,
\[ n(X) = \dim X - r(X). \]
Nullity is non-negative, not greater than dimension, monotonic, and \emph{super}modular.

\begin{proposition} \label{th:fundamental-circuit}
  Let $M$ be a matroid on vector space $V$ with nullity function $n$.  If $S \leq V$ has nullity 1,
  then there is a unique circuit $C$ satisfying $C \leq S$.  Moreover, for any $T \leq S$ we have
  \[ n(T) = \begin{cases} 1 & \text{if $C \leq T$} \\ 0 & \text{otherwise.} \end{cases} \]
\end{proposition}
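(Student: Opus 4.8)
The plan is to handle the three assertions in turn---existence of a circuit $C \leq S$, its uniqueness, and the nullity dichotomy---using only monotonicity and supermodularity of $n$. I would first record the elementary fact that every circuit has nullity exactly $1$: if $C$ is a circuit and $H < C$ is a hyperplane of $C$, then $H$ is independent, so $r(H) = \dim C - 1$, and monotonicity together with the dependence of $C$ pins $r(C) = \dim C - 1$, that is, $n(C) = 1$. For existence, since $n(S) = 1$ the space $S$ is itself dependent, so I can choose a dependent subspace $C \leq S$ of least dimension; minimality forces every proper subspace of $C$ to be independent, so $C$ is a circuit with $C \leq S$.

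The uniqueness is the crux, and it is where supermodularity does the work. Given two circuits $C_1, C_2 \leq S$, supermodularity yields
\[ n(C_1 \wedge C_2) \geq n(C_1) + n(C_2) - n(C_1 \vee C_2). \]
Here $n(C_1) = n(C_2) = 1$, while $C_1 \vee C_2 \leq S$ and monotonicity give $n(C_1 \vee C_2) \leq n(S) = 1$, so $n(C_1 \wedge C_2) \geq 1$. Thus $C_1 \wedge C_2$ is dependent; being a subspace of the circuit $C_1$, it cannot be a \emph{proper} subspace (proper subspaces of circuits are independent), so $C_1 \wedge C_2 = C_1$ and hence $C_1 \leq C_2$. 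By symmetry $C_2 \leq C_1$, so $C_1 = C_2$.

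For the dichotomy, fix $T \leq S$. Monotonicity bounds $n(T) \leq n(S) = 1$, so $n(T) \in \{0, 1\}$. If $C \leq T$, then $n(T) \geq n(C) = 1$, forcing $n(T) = 1$. Conversely, if $n(T) = 1$ then $T$ is dependent and therefore contains a circuit $C'$; as $C' \leq T \leq S$, the uniqueness just established gives $C' = C$, so $C \leq T$. Taking the contrapositive shows that $C \not\leq T$ implies $n(T) = 0$, which is exactly the claimed formula. I expect the uniqueness step to be the only genuine obstacle, since it is the single place where the hypothesis $n(S) = 1$ is combined with supermodularity; existence and the dichotomy are then routine consequences.
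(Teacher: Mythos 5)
Your proof is correct and rests on essentially the same argument as the paper's: in both, the crux is that supermodularity of nullity, combined with monotonicity bounding the nullity of a join inside $S$ by $n(S)=1$, forces the meet of two dependent subspaces of $S$ to be dependent. The paper packages this by taking $C$ to be the meet of all nullity-$1$ subspaces of $S$ at once, whereas you split it into existence (a minimal-dimension dependent subspace), pairwise uniqueness of circuits, and the dichotomy, with a short extra lemma that circuits have nullity $1$ --- but the mathematical content is the same.
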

\begin{proof}
Suppose $A, B \leq S$ both have nullity 1.  Then by monotonicity $A \vee B \leq S$ also has nullity 1, and so by supermodularity, $A \wedge B$ has nullity at least 1.  The nullity cannot be larger, so $A \wedge B$ has nullity 1.  Thus the meet of all subspaces of $S$ of nullity 1, (of which there is at least one, namely $S$ itself) is the smallest dependent subspace of $S$. Call that $C$.  Then all proper subspaces of $C$ are independent, so that $C$ is a circuit, and all subspaces of nullity 1 contain $C$.
\end{proof}
This gives an analog of fundamental circuit:
if $B \leq V$ is a basis and $a$ has dimension 1 but is not in $B$, $B \vee a$ has nullity 1 and so contains a unique circuit $C$.  This can be declared as the fundamental circuit of $a$ in $B$.  Unlike the case of ordinary matroids, we may not have $a \leq C$.

\subsection{Induction and union}

A function $f: \lV \rightarrow \ZZ$ is \emph{submodular} if for all $A, B \in \lV$ we have
  \begin{itemize}
  \item $f(\latticebottom) = 0$ (where $\latticebottom$ is the empty subspace),
  \item if $A \leq B$ then $f(A) \leq f(B)$, and
  \item $f(A \wedge B) + f(A \vee B) \leq f(A) + f(B)$.
  \end{itemize}
  That is, $f$ satisfies all the properties of a rank function except it may not be bounded by a subspace's dimension. (Caution: some authors do not require $0 \leq f(A)$.)

Given such a function $f$, there is an \emph{induced} $q$-matroid on $\lV$:  subspace $A$ is independent iff for all $B \leq A$ we have $f(B) \geq \dim B$.
The rank of an element $A$ is the maximum of $\dim B$ for any independent $B \leq A$.
An explicit formula for the rank function 
is
\[ r(A) = \min_{B \leq A} f(B) + \dim A - \dim B. \]

When $q$-matroids $M$ and $M'$ are both defined on the same lattice $\lV$,
we write $M \vee M'$ for the union of $q$-matroids as described in~\cite{q-sum}. That is,
we define function $f: \lV \rightarrow \ZZ$ by the equation $f (A) = r(A) + r'(A)$ for all $A \leq V$;  then $f$ is submodular, and $M \vee M'$ is the $q$-matroid it induces.
It is easily shown that in determining $(M_1 \vee M_2) \vee M_3$, we can just use one step and induce from $f(A) = r_1(A) + r_2(A) + r_3(A)$, and similarly for larger numbers of summands.



\begin{proposition} \label{th:union-same-rank}
Suppose $M$ and $N$ are $q$-matroids on the same space $V$. If $M \vee N$ has the same rank as $M$, then $M \vee N = M$.
\end{proposition}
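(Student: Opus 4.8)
The plan is to argue entirely with rank functions. Let $r$ and $s$ be the rank functions of $M$ and $N$, and let $\bar r$ be the rank function of $M \vee N$. By the definition of the union, $\bar r$ is induced from the submodular function $f(A) = r(A) + s(A)$, so
\[ \bar r(A) = \min_{B \leq A} \left[ r(B) + s(B) + \dim A - \dim B \right]. \]
The hypothesis that $M \vee N$ has the same rank as $M$ means $\bar r(V) = r(V)$, a single equality at the top of the lattice, and the goal is to upgrade it to $\bar r(A) = r(A)$ for \emph{every} $A \leq V$.

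The first step is the easy inequality $\bar r \geq r$. For this I would record the unit-increase property: if $B \leq A$ then $r(A) \leq r(B) + \dim A - \dim B$. This follows by choosing a chain from $B$ to $A$ that raises dimension by one at each step and applying submodularity with a $1$-dimensional space, since there the meet term is non-negative and the $1$-dimensional space has rank at most $1$. Granting this, every term in the minimum defining $\bar r(A)$ satisfies $r(B) + s(B) + \dim A - \dim B \geq r(A) + s(B) \geq r(A)$, using $s \geq 0$, so $\bar r(A) \geq r(A)$ for all $A$.

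For the reverse inequality I would extract a single useful subspace from the hypothesis. Let $B^*$ attain the minimum defining $\bar r(V) = r(V)$, so $r(B^*) + s(B^*) + \dim V - \dim B^* = r(V)$. Rearranging and invoking the unit-increase bound $r(V) - r(B^*) \leq \dim V - \dim B^*$ forces $s(B^*) = 0$ and, simultaneously, $n(V) = n(B^*)$, where $n = \dim - r$ is the nullity of $M$; in other words $B^*$ carries the full nullity of $M$. The crux of the argument is to transfer this property down to an arbitrary $A$ by showing $n(A \wedge B^*) = n(A)$. Supermodularity of nullity together with monotonicity gives $n(A \wedge B^*) \geq n(A) + n(B^*) - n(A \vee B^*) \geq n(A) + n(B^*) - n(V) = n(A)$, while monotonicity gives the opposite inequality; this equality is equivalent to $r(A) - r(A \wedge B^*) = \dim A - \dim(A \wedge B^*)$.

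To finish, I would evaluate the minimum defining $\bar r(A)$ at the particular subspace $B = A \wedge B^*$. Since $s(A \wedge B^*) \leq s(B^*) = 0$, that term equals $r(A \wedge B^*) + \dim A - \dim(A \wedge B^*)$, which by the nullity equality above is exactly $r(A)$. Hence $\bar r(A) \leq r(A)$, and combined with $\bar r(A) \geq r(A)$ this yields $\bar r(A) = r(A)$ for all $A \leq V$, i.e. $M \vee N = M$. I expect the only genuinely non-routine step to be the idea of extracting the subspace $B^*$ of full nullity from the top-level hypothesis and recognizing that meeting with $B^*$ preserves nullity for every $A$; the two surrounding inequalities are standard consequences of sub- and supermodularity.
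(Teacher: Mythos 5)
Your proof is correct, and it takes a genuinely different route from the paper's. The paper argues at the level of independent subspaces and circuits: it notes that every subspace independent in $M$ remains independent in $M \vee N$ (since $f = r_M + r_N \geq r_M$), and then uses the fundamental-circuit machinery of \cref{th:fundamental-circuit} to show every circuit $C$ of $M$ stays dependent in the union --- writing $C = I \vee a$, extending $I$ to a matroid basis $B$ of $M$, observing that $B \vee a$ cannot be independent in the union because its dimension exceeds the union's rank, and tracing the resulting witness $X$ back to $r_N(X) = 0$ and hence $r_N(C) = 0$. Your argument never touches circuits: you work entirely with the induced-rank formula, extract a minimizer $B^*$ for $\bar r(V)$, deduce $s(B^*) = 0$ and $n(B^*) = n(V)$, and then propagate this downward to every $A$ via supermodularity and monotonicity of nullity, evaluating the minimum at $A \wedge B^*$ to get $\bar r(A) = r(A)$ everywhere. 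Each approach buys something: yours is self-contained (no need for \cref{th:fundamental-circuit}, nor for the fact that a $q$-matroid is determined by its independent subspaces) and yields equality of rank functions directly; the paper's proof, by contrast, uses only the independence characterization of the induced $q$-matroid ($A$ is independent iff $f(B) \geq \dim B$ for all $B \leq A$) and never invokes the explicit formula $\bar r(A) = \min_{B \leq A}\left[ f(B) + \dim A - \dim B \right]$, so it rests on slightly weaker definitional input about the union construction. Within the paper's stated framework, where that formula is given, both proofs are valid; your unit-increase lemma is itself equivalent to the monotonicity of nullity that the paper records, so nothing you use goes beyond the paper's toolkit.
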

\begin{proof}
  Let $r_M$ be the rank function of $M$ and $r_N$ be the rank function of $N$, 
  so that $M \vee N$ is induced by function $f = r_M + r_N$.
  Obviously, any $S \leq V$ that is independent in $M$ is also independent in $M \vee N$, as for all $X \leq S$ we have $f(X) \geq r_M(X) = \dim X$.  If we can show that all circuits on $M$ are dependent in the union, we will be done.  So let $C \leq V$ be a circuit of $M$.  Then $C$ is the irredundant join of some 1-dimensional subspaces; let $a$ be one of them and let $I$ be the join of the rest.  Then $C = I \vee a$ with
  $\dim I = \dim C - 1$.  Then $I$ is independent  in $M$ and so can be extended to a basis $B$ of $M$.
  We cannot have $B \vee a = B$, as then $C \leq B$ and $B$ would not be independent.  So $B \vee a$ has nullity 1 in $M$, and $C$ is the corresponding fundamental circuit in $M$ (\cref{th:fundamental-circuit}).  By assumption $B\vee a$ is not independent in $M \vee N$ (or the union's rank would be greater than $M$'s; $\dim (B \vee a)$ is one more than the rank of $M$), so there exists some $X \leq B \vee a$ with $f(X) < \dim X$.  As $f (X) \geq r_M(X)$, $X$ is not independent in $M$.  Since $X \leq B \vee a$, $X$ must have nullity 1 in $M$ and thus $C \leq X$.  Finally, $\dim X > f (X) = r_M(X) + r_N(X) = \dim X - 1 + r_N(X)$, so we have $r_N(X) = 0$.  But then as $C \leq X$ we have $r_N(C) = 0$.  So $f (C) = r_M(C) + r_N(C) = \dim C - 1$, and $C$ is also dependent in the union.
\end{proof}

\subsection{Representability}

A large class of $q$-matroids can be defined algebraically; these are the \emph{representable} $q$-matroids.
Given some $n\times m$ matrix $G$ over a field $K$ extending $\FF_q$,
we can define the $q$-matroid represented by $G$ as follows: if a subspace of $\FF_q^m$ is the row space of matrix $X$, then its rank in the matroid is the rank of matrix $GX^T$.
It can be seen that this does not depend on the matrix $X$ we use;
if $X$ and $Y$ are full-rank matrices with equal row spans, then there is some invertible matrix $A$ so that $Y = AX$.  Then $GY^T = GX^TA^T$, and as $A^T$ is invertible, the ranks are the same.

Note that the entries of $G$ may lie in a larger field than the vector space is defined over.
Usually $K$ is equal to $\FF_{q^k}$ for some $k$.
Unlike a normal matroid that might be representable over fields of many different characteristics, a $q$-matroid representation has its characteristic fixed by the vector space the $q$-matroid is defined on.

\section{$q$-transversals}

We are finally ready to define $q$-transversals and to describe their properties.
If we want a theory of $q$-transversals with anything like \cref{th:edmonds-fulkerson}, we are immediately drawn to the avoidance version of the theory.  Consider the case of a transversal of a single set $A$.  If the set of transversals is to be a $q$-matroid,  that matroid will have rank 1 and is characterized by some loop space $L \leq V$.
Thus any 1-dimensional subspace of $L$ is not a transversal, and any 1-dimensional space not in $L$ is a transversal.  Thus $A$ must be $V \setminus L$, which is not a subspace (unless $L$ is empty or all of $V$, two trivial cases).  So we will adapt the avoidance version of \cref{sec:avoidance}.

In this section we will be talking about the vector space $V$ and its subspaces as well as $q$-matroids, so there is room for some confusion when we say ``independent'' or ``basis''.  Therefore we will qualify these terms and speak of a ``vector basis'' or ``independent vectors'' when referring to $V$ and its elements, or a ``matroid basis'' or ``independent subspace'' when referring to aspects of a $q$-matroid.

\begin{definition}
  Let a vector space $V$ be given, together with a family ${\cal X} = (X_1, \dotsc, X_n)$
  of subspaces of $V$.  A \emph{$q$-transversal} is a dimension $n$ subspace $T$ of $V$ where every vector basis $B$ of $T$ has an ordering $B = \{b_1, \dots, b_n\}$
  such that $b_i \notin X_i$ for  $i \in \{1 \upto n\}$.  That is, every vector basis of $T$ is an avoiding transversal of $\cal X$.
  A \emph{partial $q$-transversal} is a dimension $n$ subspace $T$ where every vector basis of $T$ is a partial avoiding transversal of $\cal X$.
\end{definition}

There are clearly some arbitrary decisions in this definition, and
there is room to explore other possible definitions of a $q$-transversal.  The definition here gives satisfactory analogs of the main theorems of \cref{sec:avoidance}.

The definition of partial $q$-transversal deserves some note.  It would be more analogous to the conventional theory to declare a partial $q$-transversal to be a $q$-transversal of some subsystem of $\cal X$.  This turns out to be equivalent (see \cref{th:partial-q-transversal-equivalence}), but is more difficult to work with.

We begin with an analog of ~\cref{th:edmonds-fulkerson}:
\begin{theorem} \label{th:transversals-are-matroids}
  Given a family ${\cal X} = (X_i \colonst i \in I)$ where each $X_i$ is a subspace of $V$, the set of partial $q$-transversals of $\cal X$ form the independent subspaces of a $q$-matroid.
\end{theorem}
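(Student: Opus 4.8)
The plan is to realise the desired $q$-matroid as a union of rank-$1$ $q$-matroids, mirroring Mirsky's theorem (\cref{thm:matroids-union-rank-1}) on the classical side. Assume $I = \{1 \upto n\}$. For each $i$ let $M_i$ be the rank-$1$ $q$-matroid whose loop space is $X_i$, so that $r_i(A) = 0$ when $A \leq X_i$ and $r_i(A) = 1$ otherwise. Form the union $M = M_1 \vee \dotsb \vee M_n$, which is a genuine $q$-matroid induced by the submodular function $f(A) = \sum_i r_i(A) = n - \card{\{i : A \leq X_i\}}$. The goal is then to prove that the independent subspaces of $M$ are precisely the partial $q$-transversals of $\cal X$.

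First I would give a clean dimension-theoretic reformulation of ``partial $q$-transversal''. By \cref{th:is-avoiding-transversal}, a vector basis $B$ of $T$ is a partial avoiding transversal iff $\card{B \cap {\cal X}(J)} + \card J \leq n$ for all $J$. A subspace $T$ is a partial $q$-transversal exactly when this holds for \emph{every} basis $B$ and every $J$, so I can swap the two quantifiers and, for each fixed $J$, take the maximum over bases. The key observation is that $\max_B \card{B \cap {\cal X}(J)} = \dim\bigl(T \wedge {\cal X}(J)\bigr)$: any basis meets ${\cal X}(J)$ in at most $\dim(T \wedge {\cal X}(J))$ linearly independent vectors, and extending a vector basis of $T \wedge {\cal X}(J)$ to one of $T$ attains this bound (the added vectors cannot lie in ${\cal X}(J)$). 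Hence $T$ is a partial $q$-transversal iff $\dim(T \wedge {\cal X}(J)) + \card J \leq n$ for all $J \subseteq \{1 \upto n\}$.

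Next I would unwind independence in $M$ from the induced-matroid formula: $T$ is independent iff $f(B) \geq \dim B$ for every subspace $B \leq T$, that is, iff $\card{\{i : B \leq X_i\}} + \dim B \leq n$ for all $B \leq T$. It then remains to match this against the reformulated transversal condition, and the two implications are short lattice arguments. Given the matroid condition and a set $J$, apply it to $B = T \wedge {\cal X}(J)$, which satisfies $J \subseteq \{i : B \leq X_i\}$, so $\dim(T \wedge {\cal X}(J)) + \card J \leq \dim B + \card{\{i : B \leq X_i\}} \leq n$. Conversely, given the transversal condition and a subspace $B \leq T$, set $J = \{i : B \leq X_i\}$; then $B \leq T \wedge {\cal X}(J)$, whence $\dim B + \card{\{i : B \leq X_i\}} \leq \dim(T \wedge {\cal X}(J)) + \card J \leq n$. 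This shows the two families of subspaces coincide, and since $M$ is a $q$-matroid, the theorem follows.

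I expect the main obstacle to be the reformulation step, specifically justifying $\max_B \card{B \cap {\cal X}(J)} = \dim(T \wedge {\cal X}(J))$ together with the quantifier interchange: this is where the awkward ``every vector basis'' clause of the definition is converted into a single clean dimension inequality, and it is the only place where genuine linear algebra, rather than formal lattice manipulation, enters. The remaining steps — identifying the union $q$-matroid and verifying the two inequalities — are routine once this dictionary is in place.
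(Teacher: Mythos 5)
Your proof is correct. It shares the paper's core construction --- the union $M = M_1 \vee \dotsb \vee M_n$ of the rank-$1$ $q$-matroids with loop spaces $X_i$, induced by $f = r_1 + \dots + r_n$ --- but the identification of the independent subspaces of $M$ with the partial $q$-transversals is organized quite differently. The paper works basis-by-basis: for one inclusion it extends a vector basis of $Y \leq T$ to one of $T$ and reads off an injection giving $f(Y) \geq \dim Y$; for the other it forms the sets $C_i = \{ j \st b_i \notin X_j \}$, deduces $\bigcard{\bigcup_{j\in J} C_j} \geq \card J$ from independence, and applies Hall's theorem to produce the required injection. You instead first replace the awkward ``every vector basis'' clause by the single condition $\dim\bigl(T \wedge {\cal X}(J)\bigr) + \card J \leq n$ for all $J$, obtained from the classical avoidance test (\cref{th:is-avoiding-transversal}, itself a corollary of Hall, so there is no circularity) together with the identity $\max_B \card{B \cap {\cal X}(J)} = \dim\bigl(T \wedge {\cal X}(J)\bigr)$, which you justify correctly. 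After that, everything is formal lattice manipulation, and your two substitutions ($B = T \wedge {\cal X}(J)$ in one direction, $J = \{ i \st B \leq X_i \}$ in the other) are exactly right, including the $J = \emptyset$ case under the convention ${\cal X}(\emptyset) = V$. What your route buys: your intermediate characterization is literally \cref{th:is-q-transversal}, which the paper only obtains later as a consequence of this theorem and its rank formula; in your organization it comes first, needs no $q$-matroid theory at all, and the later theorem becomes an immediate corollary. What the paper's route buys: it invokes Hall once, directly, and never needs \cref{th:is-avoiding-transversal} or the max-over-bases lemma.

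One point to patch in a final write-up: you apply \cref{th:is-avoiding-transversal} with ground set $S = V$, but the paper's proof of that theorem chooses a dummy set $U$ disjoint from the given set and from every $X_i$, and such a $U$ need not exist inside the finite set $V$ (for instance $V = \FF_2^2$ is covered by its three $1$-dimensional subspaces). This is harmless --- both sides of that equivalence depend only on the finite set in question and on the $X_i$, not on the ambient set, so you may first enlarge the ground set by dummy elements and then apply the theorem --- but the enlargement deserves a sentence.
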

\begin{proof}  Without loss of generality, we suppose $I = \{1 \upto k \}$.
  For $i \in I$, let $M_i$ be the rank 1 $q$-matroid with loop space $X_i$, and let $r_i$ be the rank function for $M_i$.  
  Then for all $Y \leq V$ we have
  \[ r_i(Y) = \begin{cases} 0 & \text{if $Y \leq X_i$} \\ 1 & \text{otherwise.} \end{cases} \]
Let $M$ be the $q$-matroid union of the $M_i$, and define \[ f(Y) = r_1(Y) + \dots + r_k(Y). \]
  Then $f$ is a submodular function on $\lV$, and it induces $M$'s rank function $r$.  In particular, a subspace $X$ is independent in $M$
  iff $f(Y) \geq \dim Y$ for every subspace $Y \leq X$.  We will show that $M$'s independent sets are exactly the partial $q$-transversals of $\cal X$.

  So, suppose first that $T$ is a partial $q$-transversal.  Let $Y$ be an arbitrary subspace of $T$, and let $\{b_1, \dotsc, b_m \}$ be a vector basis for $Y$.  Extend that to a vector basis $\{b_1, \dotsc, b_m, b_{m+1}, \dotsc \}$ of $T$.  Then this basis is a (partial) avoiding transversal of $\cal X$, and there is an injection $\pi$ so that $b_i \notin X_{\pi(i)}$.  Then $r_{\pi(i)}(Y) = 1$ for $i = 1, \dotsc, m$, and so $f(Y) \geq m = \dim Y$.  This holds for all subspaces $Y$ of $T$, so $T$ is independent in $M$.

  Conversely, suppose $T$ is independent in $M$, and let $\{b_1, \dots, b_m\}$ be a vector basis of $T$.  Let $C_i = \{ j \in \{1 \upto k\} \st b_i \notin X_j \}$,
  so that \[ r_j(\linspan {b_i}) = \begin{cases}
    1 & \text{if $j \in C_i$} \\
    0 & \text{otherwise.} \end{cases} \]
  For any  $J \subseteq \{1 \upto m\}$, consider the subspace spanned by $B = \{ b_j \st j \in J \}$.  As this is a subspace of $T$, which is independent, it must also be independent, so $f (\linspan B) \geq \card J$.
  Now $\linspan B = \bigvee_{j \in J} \linspan {b_j}$, and so
  \begin{align*}
  r_i(\linspan B)
  &= \begin{cases} 0 & \text{if $b_j \leq X_i$ for all $j \in J$} \\ 1 & \text{otherwise} \end{cases} \\
  &= \begin{cases} 1 & \text{if $i \in \bigcup_{j\in J} C_j$} \\ 0 & \text{otherwise} \end{cases}.
  \end{align*}
Then $f(\linspan B)$ counts the number of $i$ that are in that given union; that is
\[ f(\linspan B) = \bigcard {\bigcup_{j\in J} C_j}. \]

As noted above $f(\linspan B) \geq \card J$, because $\linspan B \leq T$ and so is independent.  Thus
\[ \bigcard { \bigcup_{j \in J} C_j} \geq \card J. \]
    Now by Hall's theorem (\cref{th:hall}), $(C_i \colonst i\in\{1 \upto m \})$ has a traversal.  Say this is
    $t(1), \dotsc, t(m)$.  Then $t$ is an injective mapping from $\{ 1 \upto m \}$ into $\{1 \upto k\}$ and we have
    $t(i) \in C_i$, and so $b_i \notin X_{t(i)}$.  That is, every basis of $T$ is a partial avoiding traversal of $\cal X$, and so $T$ is a partial $q$-transversal.
\end{proof}

The proof clearly shows an analog of \cref{thm:matroids-union-rank-1}
\begin{corollary} \label{thm:q-matroids-union-rank-1}
   A $q$-matroid is $q$-transversal iff it is the union of a collection of rank 1 $q$-matroids.
\end{corollary}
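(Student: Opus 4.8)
The plan is to read both implications directly off the construction already carried out in the proof of \cref{th:transversals-are-matroids}, exploiting the bijection between rank 1 $q$-matroids on $V$ and subspaces of $V$ supplied by the loop-space characterization recalled earlier: a rank 1 $q$-matroid is completely determined by its loop space $L \leq V$, and conversely every subspace of $V$ is the loop space of exactly one rank 1 $q$-matroid. Under this dictionary, a family $\cal X = (X_1, \dotsc, X_k)$ of subspaces corresponds to the collection $(M_1, \dotsc, M_k)$ of rank 1 $q$-matroids, where $M_i$ has loop space $X_i$, and this is exactly the collection whose union is formed in the proof of \cref{th:transversals-are-matroids}.

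For the forward direction, suppose $M$ is $q$-transversal. Then by definition there is a family $\cal X = (X_1, \dotsc, X_k)$ whose partial $q$-transversals are precisely the independent subspaces of $M$. The proof of \cref{th:transversals-are-matroids} produces that very $q$-matroid as the union $M_1 \vee \dotsb \vee M_k$ of the rank 1 $q$-matroids with loop spaces $X_i$. Since a $q$-matroid is determined by its family of independent subspaces, we conclude $M = M_1 \vee \dotsb \vee M_k$, exhibiting $M$ as a union of rank 1 $q$-matroids with no further work.

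For the converse, suppose $M = M_1 \vee \dotsb \vee M_k$ with each $M_i$ of rank 1. Using the loop-space correspondence I would take $X_i$ to be the loop space of $M_i$ and form the family $\cal X = (X_1, \dotsc, X_k)$. The rank 1 $q$-matroid with loop space $X_i$ is then exactly $M_i$, so the union formed in the proof of \cref{th:transversals-are-matroids} for this $\cal X$ is precisely $M$; that same proof identifies the independent subspaces of the union with the partial $q$-transversals of $\cal X$, so $M$ is $q$-transversal by definition.

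The argument is essentially bookkeeping once \cref{th:transversals-are-matroids} is available, so I do not expect a genuine obstacle; the only point requiring care is the degenerate case of a loop space equal to $V$. The displayed rank formula shows that $X_i = V$ yields the all-loops $q$-matroid (of rank $0$) rather than a genuine rank 1 summand, and symmetrically a rank $0$ summand contributes nothing to the inducing function $f$ and may be dropped from any union without changing it. Thus in the converse one should either restrict to the summands of true rank $1$ or agree to count rank $0$ $q$-matroids as degenerate rank 1 summands; with that convention fixed the correspondence between families of subspaces and collections of rank 1 $q$-matroids is a clean bijection, and the corollary follows.
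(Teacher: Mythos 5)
Your proof is correct and takes exactly the paper's approach: the paper derives this corollary immediately from the proof of \cref{th:transversals-are-matroids}, reading both directions off the loop-space correspondence just as you do. Your additional care about the degenerate case $X_i = V$ (which gives a rank $0$ summand that can be harmlessly discarded from the union) is a detail the paper glosses over entirely, and handling it explicitly only strengthens the argument.
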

We say ``collection'' here rather than ``set'' as there may be multiple copies of a $q$-matroid needed.

We also have a result corresponding nicely to Hall's (\cref{th:hall}), in its avoidance version \cref{th:avoid-hall}.
\begin{theorem} \label{th:q-Hall}
  System ${\cal X} = (X_1, X_2, \dotsc, X_n)$ has a $q$-transversal iff for every nonempty $J \subseteq \{1 \upto n\}$ we have
    \[ \dim {\cal X}(J)  + \card J \leq \dim V. \]
\end{theorem}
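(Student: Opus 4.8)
The plan is to read the result off from the $q$-matroid $M$ built in \cref{th:transversals-are-matroids}, by computing its rank at the top space $V$. Recall that $M$ is induced by $f(Y) = \sum_{i=1}^{n} r_i(Y) = \#\{\, i : Y \not\leq X_i \,\}$. First I would observe that a $q$-transversal is precisely a partial $q$-transversal of dimension $n$: an avoiding transversal of $\cal X$ comes from an injection of $\{1 \upto n\}$ into an index set of the same size $n$, which is forced to be a bijection, so the partial and full notions coincide in dimension $n$. Hence $\cal X$ has a $q$-transversal iff $M$ has an independent subspace of dimension $n$; since $r(V)$ is the largest dimension of an independent subspace of $V$, this happens iff $r(V) = n$.

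Next I would note that $r(V) \leq n$ always: the rank formula $r(V) = \min_{B \leq V} f(B) + \dim V - \dim B$ gives $r(V) \leq f(V) \leq n$ upon taking $B = V$. So the existence of a $q$-transversal is equivalent to $r(V) \geq n$. Writing $g(B) = \#\{\, i : B \leq X_i \,\} = n - f(B)$, the rank formula rearranges to
\[ r(V) = n + \dim V - \max_{B \leq V} \bigl( \dim B + g(B) \bigr). \]
Taking $B = V$ shows the maximum is at least $\dim V$, so $r(V) = n$ is equivalent to the single family of inequalities $\dim B + g(B) \leq \dim V$, one for every subspace $B \leq V$.

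It then remains to match this ``for all $B$'' condition with the stated ``for all nonempty $J$'' condition, and this translation is the only place that needs care. Given a subspace $B$, set $J = \{\, i : B \leq X_i \,\}$: if $J = \emptyset$ then $\dim B + g(B) = \dim B \leq \dim V$ trivially, while if $J \neq \emptyset$ then $B \leq {\cal X}(J)$ and $g(B) = \card J$, so the Hall bound at $J$ gives $\dim B + g(B) \leq \dim {\cal X}(J) + \card J \leq \dim V$. Conversely, given a nonempty $J$, take $B = {\cal X}(J)$; then $g(B) \geq \card J$ and $\dim B = \dim {\cal X}(J)$, so the inequality at this $B$ yields $\dim {\cal X}(J) + \card J \leq \dim V$. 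The extreme case $B = \latticebottom$ (equivalently $J = \{1 \upto n\}$) is exactly what records $n \leq \dim V$. Chaining these equivalences closes the argument; the substantive work has already been supplied by \cref{th:transversals-are-matroids}, so no genuine obstacle remains beyond this bookkeeping.
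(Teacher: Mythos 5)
Your proposal is correct and follows essentially the same route as the paper: both invoke \cref{th:transversals-are-matroids}, compute $r(V)$ from the induced-rank formula $r(V) = \min_{B \leq V} f(B) + \dim V - \dim B$, observe $r(V) \leq n$, and translate the quantification over subspaces $B$ into quantification over index sets $J$ via $J = \{i : B \leq X_i\}$ in one direction and $B = {\cal X}(J)$ in the other. Your version is in fact slightly more careful than the paper's at two points: you justify explicitly that a dimension-$n$ partial $q$-transversal is a full $q$-transversal, and in the converse direction you correctly use $g({\cal X}(J)) \geq \card J$ where the paper loosely claims equality.
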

\begin{proof}
  Following the line of argument in the proof of \cref{th:transversals-are-matroids}, define rank functions $r_i$ and function $f$.  Then the matroid of partial $q$-transversals of $\cal X$ is induced by $f$ and its rank function is therefore
  \[ r(V) = \min_{T \leq V} f(T) + \dim V - \dim T. \]
$r(V)$ can be at most $n$; if we take $T=V$ we see $r(V) \leq f(V) \leq n$.
So $\cal X$ has a $q$-transversal iff $f(V) \geq n$, equivalently iff
  for all $T \leq V$ we have
  \[ f(T) + \dim V - \dim T \geq n. \]
  For any $T$ we have $f (T) = n - \card{ \{ i \st T \leq X_i \} }$, as each $r_i$ adds 1 to $f(T)$ unless $T$ is a subspace of $X_i$.  Now let $J =  \{ i \st T \leq X_i \}$.  Then we have
  $T \leq {\cal X}(J)$ 
  so that $\dim T \leq \dim {\cal X}(J)$
and
$f(T) = n - \card J$.
So we have
\[ f(T) + \dim V - \dim T \geq  n - \card J + \dim V - \dim {\cal X}(J), \]
with equality when $T$ equals ${\cal X}(J)$,
and this is always larger than or equal to $n$ under the conditions
stated in the theorem.
\end{proof}

\cref{th:is-transversal} in its avoidance form \cref{th:is-avoiding-transversal} also has a $q$-analog.
\begin{theorem} \label{th:is-q-transversal}
  A subspace $T$
  is a partial $q$-transversal of family ${\cal X} = (X_1, \dotsc, X_n)$ iff for every $J \subseteq \{1 \upto n\}$ we have
\[ \dim \left( T \cap {\cal X}(J) \right) + \card J \leq n  \]
\end{theorem}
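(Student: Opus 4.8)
The plan is to route everything through the $q$-matroid $M$ of \cref{th:transversals-are-matroids} rather than to imitate the extension-based argument used for \cref{th:is-avoiding-transversal}. By that theorem, $T$ is a partial $q$-transversal of $\cal X$ exactly when $T$ is independent in the $q$-matroid induced by $f = r_1 + \dots + r_n$, where $r_i$ is the rank-1 $q$-matroid with loop space $X_i$. By the definition of the induced matroid, this independence is equivalent to the requirement $f(Y) \geq \dim Y$ for every subspace $Y \leq T$. So the whole task reduces to showing that this family of inequalities, indexed by subspaces $Y \leq T$, is equivalent to the stated family, indexed by subsets $J \subseteq \{1 \upto n\}$.

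The bridge between the two index sets is the formula $f(Y) = n - \card{\{i \st Y \leq X_i\}}$, already established in the proof of \cref{th:q-Hall}, since each $r_i$ contributes $1$ to $f(Y)$ unless $Y \leq X_i$. Substituting this, the independence condition reads $\dim Y + \card{\{i \st Y \leq X_i\}} \leq n$ for all $Y \leq T$. I would then exhibit the equivalence with the theorem's condition by pairing subspaces with index sets through the two natural maps $Y \mapsto \{i \st Y \leq X_i\}$ and $J \mapsto T \cap {\cal X}(J)$.

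For the forward direction, given any $J$ I would take $Y = T \cap {\cal X}(J)$; since $Y \leq {\cal X}(J) \leq X_j$ for every $j \in J$, we have $J \subseteq \{i \st Y \leq X_i\}$, so $\card J \leq \card{\{i \st Y \leq X_i\}}$, and the independence inequality at $Y$ then yields $\dim(T \cap {\cal X}(J)) + \card J \leq n$. For the reverse, given any $Y \leq T$ I would take $J = \{i \st Y \leq X_i\}$; then $Y \leq \bigcap_{j\in J} X_j = {\cal X}(J)$ together with $Y \leq T$ forces $\dim Y \leq \dim(T \cap {\cal X}(J))$, so the theorem's inequality at this $J$ gives back $\dim Y + \card{\{i \st Y \leq X_i\}} \leq n$. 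These two maps form a Galois-type pairing, and the asymmetry (only $J \subseteq \{i \st Y \leq X_i\}$ in one case, versus equality engineered in the other) is exactly what lets both implications go through with the correct witness.

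I do not expect a serious obstacle: the content lies entirely in choosing the right witness on each side, and the one calculation, $f(Y) = n - \card{\{i \st Y \leq X_i\}}$, is already available. The only point needing a word of care is the edge case $\dim T > n$. Such a $T$ cannot be independent in $M$, whose rank is at most $n$ because $f(V) \leq n$, and correspondingly the theorem's condition already fails at $J = \emptyset$, where it reads $\dim T \leq n$; so the two sides agree there without separate treatment, mirroring the opening move of the classical proof of \cref{th:is-avoiding-transversal}.
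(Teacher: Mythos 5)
Your proposal is correct and takes essentially the same route as the paper's own (very terse) proof: the paper also reduces the statement to independence of $T$ in the $q$-matroid $M$ of \cref{th:transversals-are-matroids} and invokes ``a calculation similar to the proof of \cref{th:q-Hall}'', which is precisely the computation you carry out with $f(Y) = n - \card{\{i \st Y \leq X_i\}}$ and the two witness choices $Y = T \cap {\cal X}(J)$ and $J = \{i \st Y \leq X_i\}$. In effect you have supplied in full the details the paper leaves to the reader, with no gap.
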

\begin{proof}
  Let $M$ be the $q$-matroid of partial $q$-transversals of $\cal X$, and let $r$ be its rank function.
  A calculation similar to the proof of \cref{th:q-Hall} shows $r(T) = \dim T$ is equivalent to the given condition.
\end{proof}

\newcommand\barn{\overline{n}}
We define \emph{bar nullity} as an analog of co-nullity that will not require us to take a dual.
If $M$ is a $q$-matroid and $\cal B$ is the set of bases, then for $X \leq V$ we define
\[ \barn(X) = \min_{B \in {\cal B}} \dim (B \cap X). \]
The analogy with co-nullity should be clear; compare this definition to \cref{eq:nu*}.
With this function we conjecture a $q$-analog of Rado's theorem (\cref{th:rado}) in its avoidance form \cref{th:avoid-rado}.
\begin{conjecture}
    Suppose $M$ is a $q$-matroid on vector space $V$, with bar nullity function $\barn$, and $\cal X$ is a system of subspaces of $V$.  Then $\cal X$ has a $q$-transversal that is independent in $M$ iff for all $J \subseteq I$ we have
  \[ \barn ( {\cal X}(J) ) + \card J \leq \barn (V). \]
\end{conjecture}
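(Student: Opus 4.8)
The plan is to prove the two directions separately: necessity is straightforward and I expect it to go through essentially as in the classical case, while sufficiency is the genuinely hard part and is where I expect the obstruction to lie.

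For necessity, suppose $T$ is a $q$-transversal of ${\cal X}$ that is independent in $M$, so $\dim T = n$ and $r_M(T) = n$. I would extend $T$ to a matroid basis $B$ of $M$ (an available operation, as used in the proof of \cref{th:union-same-rank}), so that $T \le B$ and $\dim B = r(M) = \barn(V)$. Because $\barn$ is a minimum over all bases, specializing to this single basis gives $\barn({\cal X}(J)) \le \dim(B \cap {\cal X}(J))$ for every $J$. The quotient map $B \to B/T$ restricts to $B \cap {\cal X}(J)$ with kernel $T \cap {\cal X}(J)$, which yields the linear-algebra inequality
\[ \dim(B \cap {\cal X}(J)) \le \dim(T \cap {\cal X}(J)) + (\dim B - \dim T). \]
Applying \cref{th:is-q-transversal} to $T$ gives $\dim(T \cap {\cal X}(J)) \le n - \card J$, and combining this with $\dim B - \dim T = \barn(V) - n$ produces $\dim(B \cap {\cal X}(J)) \le \barn(V) - \card J$. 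Hence $\barn({\cal X}(J)) + \card J \le \barn(V)$, as required. This mirrors exactly how \cref{th:avoid-rado} follows in the classical setting, with $\barn$ playing the role of $\nu^*$.

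For sufficiency I would first recast the goal in matroid terms. By \cref{th:transversals-are-matroids} the partial $q$-transversals of ${\cal X}$ are the independent subspaces of the transversal $q$-matroid $N = M_1 \vee \dots \vee M_n$, so a $q$-transversal of dimension $n$ that is independent in $M$ is precisely a common independent subspace of $M$ and $N$ of dimension $n$. The statement is therefore a $q$-analog of the claim that $M$ and $N$ possess a common independent set of maximum possible size, a special case of matroid intersection. Bar nullity is the natural language here because the classical intersection bound $r_M(A) + r_N(S \setminus A) \ge k$ involves a set complement, and the identity $\rho(S \setminus X) = \nu^*(S) - \nu^*(X)$ used in \cref{sec:avoidance} rewrites complements through co-nullity; its $q$-analog should replace the problematic subspace ``complement'' by $\barn$. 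So the plan is to formulate and prove a $q$-matroid-intersection min--max theorem phrased in terms of $\barn$, and then specialize the second matroid to $N$, whose bar nullity on the relevant subspaces is governed by the dimensions $\dim {\cal X}(J)$.

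The main obstacle is this sufficiency step, and I expect it to be hard for a structural reason: there is no established $q$-matroid intersection theorem, and the standard proof strategies break down because subspace operations do not behave like set operations. In an inductive argument one would want to contract a non-loop point $p$ with $p \not\le X_n$ and recurse on $M/p$ with the reduced system $\bigl((X_i + p)/p\bigr)_{i<n}$; but intersection does not commute with passing to a quotient, so $\bigcap_{j\in J}(X_j + p)/p$ can be strictly larger than $({\cal X}(J) + p)/p$, and this slack can destroy the inductive bar-nullity hypothesis. Moreover $\barn$, being a minimum over bases, has no clean contraction formula with which to propagate the condition to $M/p$. An augmenting-path argument, or a direct adaptation of one of Rado's proofs, runs into the same difficulty: each ultimately rests on choosing an avoiding element that preserves the Hall--Rado inequalities, and controlling that choice simultaneously against the $M$-structure and the non-modular behaviour of $\cal X$ is exactly the point where the classical proof has no evident $q$-counterpart. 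I would therefore regard either a genuine $q$-matroid-intersection theorem in bar-nullity form, or a contraction lemma robust to the failure of modularity, as the key missing ingredient.
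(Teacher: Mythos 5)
The statement you were asked to prove is, in the paper, an open conjecture: the author states explicitly that Rado's theorem ``does not yet have a known $q$-analog,'' and the conclusion section lists this among the analogs that are ``only conjectured.'' So there is no proof in the paper to compare yours against, and your proposal does not close the gap either: you prove only the necessity direction, and for sufficiency you offer a plan (a $q$-matroid intersection theorem phrased in bar nullity) together with an analysis of why the standard inductive and augmenting-path arguments break down. That analysis is sensible --- the failure of intersection to commute with quotients, and the absence of a contraction formula for a minimum-over-bases function, are genuine obstructions --- but it is a description of difficulties, not an argument, and you say as much yourself. As a proof of the biconditional, the proposal therefore has a genuine gap, and it is the same gap the paper leaves open.

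What you do prove appears correct and is worth recording. In the necessity direction: extending the independent $q$-transversal $T$ to a matroid basis $B$ of $M$ is legitimate (the paper uses exactly this step in the proof of \cref{th:union-same-rank}); all bases of a $q$-matroid have dimension $r(M)$, so $\overline{n}(V) = r(M) = \dim B$; the quotient map $B \to B/T$ restricted to $B \cap {\cal X}(J)$ has kernel $T \cap {\cal X}(J)$, giving
\[ \dim\left(B \cap {\cal X}(J)\right) \leq \dim\left(T \cap {\cal X}(J)\right) + \dim B - \dim T; \]
and \cref{th:is-q-transversal} bounds $\dim\left(T \cap {\cal X}(J)\right)$ by $n - \card J$. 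Chaining these with $\overline{n}({\cal X}(J)) \leq \dim\left(B \cap {\cal X}(J)\right)$ yields $\overline{n}({\cal X}(J)) + \card J \leq \overline{n}(V)$, as required. This is a modest but real contribution beyond what the paper records, since the paper states the conjecture without verifying even the easy direction; your framing of the hard direction as a $q$-analog of matroid intersection, specialized via \cref{th:transversals-are-matroids}, is also the natural way to attack it. But you should present this work as a partial result supporting the conjecture, not as a proof of it.
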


\subsection{Presentations} \label{sec:q-presentations}

If $M$ is the $q$-matroid of $q$-transversals of a system $\cal X$ we will say that $\cal X$ is a \emph{presentation} of $M$.
We can always find a presentation so that the bases of the associated $q$-matroid are full transversals, not partial, analogous to parts 1 and 3 of \cref{th:transversal-flats}.
\begin{theorem}
If $M$ is a $q$-transversal matroid of rank $r$, then it has a presentation with $r$ members.
\end{theorem}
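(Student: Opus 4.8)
The plan is to start from the representation of $M$ as a union of rank-1 $q$-matroids guaranteed by \cref{thm:q-matroids-union-rank-1} and to prune it down to exactly $r$ summands without changing the matroid. Since $M$ is $q$-transversal it has some presentation $(X_1, \dots, X_n)$; writing $M_i$ for the rank-1 $q$-matroid with loop space $X_i$, we have $M = M_1 \vee \dots \vee M_n$ and $\operatorname{rank}(M) = r$. Because the rank of a union is bounded by the number of summands we automatically have $n \ge r$, so the goal reduces to finding an index set $S$ of size $r$ with $\bigvee_{i \in S} M_i = M$; the surviving subfamily $(X_i \colonst i \in S)$ is then the desired presentation.

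The selection is greedy. First I would record the elementary bound that forming a union with a single rank-1 $q$-matroid raises the rank by at most $1$: taking the minimizing subspace to be $V$ in the induced-rank formula gives $r_{N \vee M_i}(V) \le r_N(V) + r_{M_i}(V) = r_N(V) + 1$. Now build $S$ one index at a time, starting from $S = \emptyset$, always adjoining some $M_i$ (with $i \notin S$) that strictly increases the rank of $\bigvee_{j \in S} M_j$. The crucial point is that this can always be done while the current rank is below $r$: if no leftover $M_i$ increased the rank, then each would make $\bigvee_{j \in S} M_j \vee M_i$ have the same rank as $\bigvee_{j \in S} M_j$, so \cref{th:union-same-rank} would force $\bigvee_{j\in S} M_j \vee M_i = \bigvee_{j \in S} M_j$; adjoining all the leftover summands one by one would then leave the union unchanged, giving $M = \bigvee_{j \in S} M_j$ of rank below $r$, a contradiction. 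By the at-most-$1$ bound each successful step raises the rank by exactly $1$, so after exactly $r$ steps we have $|S| = r$ and $\operatorname{rank}\bigl(\bigvee_{j \in S} M_j\bigr) = r$.

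To finish, write $N = \bigvee_{j \in S} M_j$ and let $P$ be the union of the remaining summands, so that $M = N \vee P$. Since $\operatorname{rank}(N) = r = \operatorname{rank}(M) = \operatorname{rank}(N \vee P)$, \cref{th:union-same-rank} (with $N$ in the role of ``$M$'' and $P$ in the role of ``$N$'') gives $N \vee P = N$, that is $M = N$; thus $(X_i \colonst i \in S)$ presents $M$ and has exactly $r$ members. The main obstacle is the claim that the greedy step never gets stuck while the rank is still below $r$: everything hinges on \cref{th:union-same-rank}, which converts ``the rank failed to increase'' into ``the union did not change'', and it is worth stating carefully that this lets us collapse all the unused summands at once rather than one at a time. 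The remaining bookkeeping---that each successful step adds exactly one to the rank, so that the process terminates with precisely $r$ indices---is routine once that claim is in hand.
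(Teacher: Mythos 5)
Your proof is correct and follows essentially the same route as the paper: write $M$ as the union of the rank-1 $q$-matroids attached to a presentation, and use \cref{th:union-same-rank} to discard every summand whose adjunction fails to increase the rank, leaving exactly $r$ of them. Your version merely makes explicit the bookkeeping (the greedy selection, the rank-increases-by-at-most-one bound, and the final collapse of the unused summands) that the paper's shorter proof leaves implicit.
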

\begin{proof}
  Let ${\cal X} = (X_1, \dotsc, X_k)$ be a presentation, and let $M_i$ be the rank-1 $q$-matroid with
  loop space $X_i$.  We saw earlier that $M = M_1 \vee \cdots \vee M_k$.
  Now if we form this union one matroid at a time, any summand that does not increase the rank can be discarded, by \cref{th:union-same-rank}.  This leaves a series of $r$ of the $M_i$, and their loop spaces give a presentation of $M$.
\end{proof}

We can use this idea to show that the two apparently different notions of partial $q$-transversal are the same.
\begin{corollary}\label{th:partial-q-transversal-equivalence}
$T$ is a partial $q$-transversal of ${\cal X} = (X_1, \dotsc, X_n)$ iff it is a $q$ transversal of some subsystem.
\end{corollary}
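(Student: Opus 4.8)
The plan is to prove the two implications separately; the direction ``$q$-transversal of a subsystem $\Rightarrow$ partial $q$-transversal'' is immediate, and essentially all the work is in the converse. For the easy direction, suppose $T$ is a $q$-transversal of a subsystem $(X_j \colonst j \in J)$. Then $\dim T = \card J$ and every vector basis of $T$ is an avoiding transversal of $(X_j \colonst j\in J)$; in particular each such basis is a partial avoiding transversal of the full family $\cal X$, so $T$ is a partial $q$-transversal of $\cal X$ directly from the definition.

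For the converse, let $T$ be a partial $q$-transversal of $\cal X$ with $\dim T = m$. By \cref{th:transversals-are-matroids} this says precisely that $T$ is independent in $M = M_1 \vee \cdots \vee M_n$, where $M_i$ is the rank-1 $q$-matroid with loop space $X_i$. It suffices to find $J$ with $\card J = m$ for which $T$ is still independent in $M_J := \bigvee_{j\in J} M_j$: applying \cref{th:transversals-are-matroids} to the subsystem $(X_j \colonst j\in J)$ then makes $T$ a partial $q$-transversal of that subsystem, and since $\dim T = m = \card J$ any injection of an $m$-element vector basis into the $m$-element index set $J$ is forced to be a bijection, so $T$ is in fact a full $q$-transversal of $(X_j \colonst j\in J)$.

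To produce such a $J$, I would localize to $T$ and invoke the $r$-member presentation theorem proved just above. For each $i$, let $M_i'$ be the $q$-matroid on the space $T$ obtained by restricting $r_i$ to subspaces of $T$; its loop space is $X_i \cap T$. For a subspace $Y \leq T$ the condition $Y \leq X_i$ coincides with $Y \leq X_i \cap T$, so the values of $\sum_{j \in J} r_j$ on subspaces of $T$ are unchanged by this passage; consequently the $q$-matroid union commutes with restriction to $T$, and $M' := M_1' \vee \cdots \vee M_n'$ is exactly $M$ viewed inside $T$. Because $T$ is independent in $M$ and $\dim T = m$, the matroid $M'$ has rank $m$ with $T$ as a matroid basis, that is, $M'$ is the free $q$-matroid on $T$. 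The presentation theorem, applied to $M'$, discards summands until $m$ of them remain, yielding a sub-presentation $(X_j \cap T \colonst j\in J)$ with $\card J = m$ whose union is again $M'$. Translating back, $T$ is independent in $M_J$, which is exactly what the previous paragraph requires.

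The step I expect to be the main obstacle is the need for a single index set $J$ that works simultaneously for every vector basis of $T$. In the classical set-theoretic version one just takes $J$ to be the image of the injection witnessing that $T$ is a partial transversal, but here different vector bases of $T$ may call for different injections, so no individual basis pins down $J$. Passing to the $q$-matroid $M$ (rather than reasoning basis-by-basis) is what overcomes this, and the two technical points to check with care are that restriction to $T$ commutes with the $q$-matroid union and that the presentation theorem, whose proof removes summands one at a time, genuinely returns a subfamily of the presenting subspaces, so that the surviving indices assemble into the desired $J$.
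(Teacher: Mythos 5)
Your proof is correct, but it reaches the subfamily $J$ by a different formal route than the paper. The paper works directly in $\lV$: it considers the chain of matroids $N_i$ of partial $q$-transversals of $(X_1,\dotsc,X_i)$, tracks the rank $\hat r_i(T)$ of the fixed subspace $T$ along this chain, and reorders the $X_i$ (moving any summand that does not raise the rank of $T$ to the end) so that $T$ becomes a basis of $N_j$ with $j=\dim T$; it then concludes immediately that $T$ is a full $q$-transversal of $(X_1,\dotsc,X_j)$. You instead localize: you restrict every rank-$1$ matroid $M_i$ to $T$, prove that the $q$-matroid union commutes with this restriction (a clean observation the paper never needs, and which is easy from the induced-rank formula since for $Y \leq T$ one has $Y \leq X_i$ iff $Y \leq X_i \cap T$), note that the restricted union is the free $q$-matroid on $T$, and then invoke the $r$-member presentation theorem proved just above the corollary. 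Both arguments rest on the same engine --- greedy discarding of summands that do not raise rank, i.e.\ \cref{th:union-same-rank} --- but yours uses it as a black box through the presentation theorem, while the paper reruns the greedy step by hand on the rank of $T$. Your route buys modularity and reuses an existing theorem; its one cost, which you correctly flag, is that the presentation theorem's \emph{statement} does not promise that the $m$-member presentation is a subfamily of the given one, so you must appeal to its \emph{proof} (or restate it slightly more strongly), whereas the paper's direct reordering produces the subfamily by construction. You also spell out the final step (that a partial $q$-transversal of an $m$-member system of dimension $m$ is a full $q$-transversal because the witnessing injections are forced to be bijections), which the paper leaves implicit.
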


\begin{proof}
  The ``if'' part is trivial.  For the ``only if'' part,
  for $1 \leq i \leq n$ let $N_i$ be the matroid of partial $q$-transversals of the subsystem $(X_1, \dotsc, X_i)$, with rank function $\hat r_i$. So $T$ has rank $\hat r_n(T)$ in the matroid associated with $\cal X$, and as $T$ is a partial $q$-transversal we have $\hat r_n(T) = \dim T$.
As the order in which the $X_i$ are listed is irrelevant, we can assume they are listed in order so that $\hat r_i(T) = \min(i, \hat r_n T)$. We can do this by moving any $X_i$ that does not raise the rank of $T$ to the end.  Then $T$ is a basis of $N_j$ where $j=\dim T$, and $T$ is a transversal of $(X_1, \dotsc, X_j)$. 
\end{proof}

There is a simple $q$-analog of \cref{item:transversal-flats} of \cref{th:transversal-flats}.
\begin{theorem} \label{th:q-transversal-flats}
  If $M$ is the $q$-matroid of partial $q$-transversals of $(X_1, \dotsc, X_n)$ and $1 \leq i \leq n$, then $X_i$ is a flat of $M$.
\end{theorem}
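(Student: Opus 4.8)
The plan is to use the definition of a flat directly: a subspace is a flat of $M$ precisely when it equals its own closure, i.e.\ no properly larger subspace has the same rank. By monotonicity of $r$ it suffices to rule out a single extra dimension, so I would first reduce the claim to showing that for every one-dimensional subspace $a \leq V$ with $a \not\leq X_i$ we have $r(X_i \vee a) > r(X_i)$. Indeed, granting this strict increase for every such $a$, any $B$ with $X_i \leq B$ and $B \neq X_i$ contains some one-dimensional $a \not\leq X_i$, whence $r(B) \geq r(X_i \vee a) > r(X_i)$, so $X_i$ is its own closure.

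Next I would invoke the structure from \cref{th:transversals-are-matroids}: $M$ is induced by $f = r_1 + \dots + r_n$, so that $r(Y) = \min_{B \leq Y}\bigl[f(B) + \dim Y - \dim B\bigr]$. Fixing $a$ as above, let $B'$ be a minimizer for $r(X_i \vee a)$, so $r(X_i \vee a) = f(B') + \dim(X_i \vee a) - \dim B' = f(B') + \dim X_i + 1 - \dim B'$, using $\dim(X_i \vee a) = \dim X_i + 1$. Put $B = B' \wedge X_i \leq X_i$. The dimension identity $\dim B' - \dim B = \dim(B' \vee X_i) - \dim X_i$, together with $B' \vee X_i \leq X_i \vee a$, gives $\dim B' - \dim B \leq 1$. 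Since $B \leq X_i$, the value $f(B) + \dim X_i - \dim B$ is one of the terms over which $r(X_i)$ is minimized, hence is at least $r(X_i)$; the whole argument amounts to transferring this inequality from $B$ to $B'$ despite the added dimension $a$.

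The argument then splits on whether $B' \leq X_i$. If $B' \leq X_i$, then $B' = B$ and $r(X_i \vee a) = \bigl[f(B) + \dim X_i - \dim B\bigr] + 1 \geq r(X_i) + 1$ immediately. The main obstacle is the remaining case $B' \not\leq X_i$, where $B'$ reaches outside $X_i$ and the bookkeeping above appears to lose information. Here I would exploit that the $i$-th rank-one summand $M_i$ has loop space exactly $X_i$: because $B \leq X_i$ while $B'\not\leq X_i$, we have $r_i(B) = 0$ but $r_i(B') = 1$, and since every $r_j$ is monotone this forces $f(B') \geq f(B) + 1$. Moreover $\dim B' = \dim B + 1$, the difference being at most $1$ and positive because $B = B' \wedge X_i$ is properly contained in $B'$. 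These two gains cancel the dimension penalty: $r(X_i \vee a) = f(B') + \dim X_i + 1 - \dim B' \geq \bigl(f(B)+1\bigr) + \dim X_i - \dim B \geq r(X_i) + 1$. In both cases $r(X_i \vee a) \geq r(X_i) + 1 > r(X_i)$, so $X_i$ is a flat.
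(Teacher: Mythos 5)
Your proof is correct and follows essentially the same route as the paper's: reduce flatness to a strict rank increase over one added dimension, apply the induced-rank formula $r(Z)=\min_{A\leq Z} f(A)+\dim Z-\dim A$ to a minimizer, split on whether the minimizer lies inside $X_i$, and in the hard case drop to a codimension-one subspace of the minimizer inside $X_i$ while gaining $1$ in $f$ from the summand $r_i$. The only cosmetic difference is that you take $B'\wedge X_i$ directly (using the modular law to get $\dim B'-\dim B=1$), whereas the paper intersects with an auxiliary hyperplane $H$ satisfying $X_i=Y\cap H$; these produce the same subspace and the same inequalities.
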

\begin{proof}
  Fix some $i$ and let $Y$ be a subspace with $X_i \leq Y$ and $\dim Y = \dim X_i + 1.$
  We will show that $r(Y) > r(X_i)$.
  Recall that for any subspace $Z$ we have
  \begin{equation}\label{eq:rZ}
    r(Z) = \min_{A \leq Z} f(A) + \dim Z - \dim A,
  \end{equation}
  where function $f$ is defined as in the proof of \cref{th:transversals-are-matroids}.  Applying this to $Y$, we see that there is some subspace $B \leq Y$ with
  \[ r(Y) = f(B) + \dim Y - \dim B. \]
    If $B \leq X_i$, then by \eqref{eq:rZ}
    \begin{align*}
      r(X_i) &\leq f(B) + \dim X_i - \dim B \\
      &=f(B) + (\dim Y-1) - \dim B \\
      &= r(Y) - 1,
    \end{align*}
    proving the claim in this case.  So
    now suppose $B$ is not below $X_i$.
    There exists some subspace $H$ of codimension 1 so that $X_i = Y \cap H$.  Let $A = B \cap H$, so that
    $A \leq X_i$ and $\dim A = \dim B - 1$.  Now $r_i(A) = 0$ and $r_i(B) = 1$, so that $f(B) \geq 1 + f(A)$.
Again by  \eqref{eq:rZ} we have
    \begin{align*}
      r(X_i) &\leq f(A) + \dim X_i - \dim A \\
      &\leq (f(B) -1) + (\dim Y-1) - (\dim B -1)\\
      &= r(Y) - 1, 
    \end{align*}
    showing $r(Y) > r(X_i)$ in all cases. Therefore  $X_i$ is a flat of $M$.
\end{proof}

\begin{definition} Let ${\cal X} = (X_i \st i \in I)$ be a family of subspaces, and let $M$ be the matroid of partial $q$-transversals of $\cal X$.  Suppose for every family ${\cal Y} = (Y_i \st i \in I)$, where for all $i \in I$, $Y_i$ is a subspace of $X_i$, if $\cal Y$ is also a presentation of $M$ then for all $i$ we have $Y_i = X_i$.  Then $\cal X$ is a \emph{minimal presentation} of $M$.
\end{definition}
These minimal presentations correspond the maximal presentations of ordinary transversals.  Clearly any $q$-transversal matroid has at least one minimal presentation.

\begin{theorem}
  Let  ${\cal X} = (X_i \st i \in I)$ be a presentation of matroid $M$.  Then $\cal X$ is minimal for $M$ iff each $X_i$ is cyclic in $M$.
\end{theorem}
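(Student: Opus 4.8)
The plan is to reduce the statement to a single index and to one clean rank computation. Write $M = M_1 \vee \dots \vee M_n$ as in the proof of \cref{th:transversals-are-matroids}, so that $M$ is induced by $f(Y) = \sum_i r_i(Y)$. I will call a subspace $Z$ \emph{cyclic} if $r(W) = r(Z)$ for every codimension-$1$ subspace $W \le Z$ (equivalently, $M|Z$ has no coloop); note that each $X_i$ is automatically a flat by \cref{th:q-transversal-flats}, so only the cyclicity carries content. Fix an index $i$, let $N = \bigvee_{j \ne i} M_j$, and let $r_N$ be its rank function, induced by $g = \sum_{j \ne i} r_j$. Since $r_i$ vanishes on every subspace of $X_i$, we have $f = g$ on $\mathcal{L}(X_i)$, and the induced-rank formula then gives $r(A) = r_N(A)$ for all $A \le X_i$. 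This identity is the computational backbone of both directions.

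First I would prove the reduction: $\mathcal{X}$ is minimal iff no single $X_i$ admits a proper subspace $Y_i < X_i$ for which replacing only $X_i$ by $Y_i$ still presents $M$. The nontrivial direction uses that shrinking a loop space can only raise the induced rank (a smaller loop space gives a larger $r_i$, hence a larger $f$, hence a larger induced rank): if $\mathcal{Y} \le \mathcal{Z} \le \mathcal{X}$ coordinatewise then $r^{\mathcal{Y}} \ge r^{\mathcal{Z}} \ge r^{\mathcal{X}}$, so a presentation $\mathcal{Y} \ne \mathcal{X}$ witnessing non-minimality can be straightened to one differing from $\mathcal{X}$ in a single coordinate. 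Thus it suffices to establish the key claim: \emph{coordinate $i$ is shrinkable iff $X_i$ is not cyclic.}

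For ``not cyclic $\Rightarrow$ shrinkable'' I would take a codimension-$1$ subspace $W \le X_i$ with $r(W) = r(X_i) - 1$ and show that shrinking $X_i$ to $W$ leaves $M$ unchanged. Writing $f' = f - r_i + r_i'$, where $r_i'$ is the rank of the rank-$1$ matroid with loop space $W$, one sees $f'$ exceeds $f$ by exactly $1$ on the ``bumped'' subspaces $B$ with $B \le X_i$ and $B \not\le W$, and equals $f$ elsewhere; so $r' \ge r$ always, with equality precisely when every $A$ has an $f$-minimizer that is not bumped. Given a bumped minimizer $B$ I would replace it by $B \wedge W$: since $B \vee W = X_i$ (as $W$ is a hyperplane of $X_i$ and $B \not\le W$), submodularity of $r_N$ together with $r_N(W) = r_N(X_i) - 1$ forces $r_N(B \wedge W) = r_N(B) - 1$, so $B \wedge W$ is again a minimizer and is not bumped. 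This is the step I expect to be the main obstacle: without the hyperplane/submodularity identity one cannot exclude the possibility that the dropped direction fails to lower $r_N$, and the argument would stall.

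For ``cyclic $\Rightarrow$ not shrinkable'' I would show, for any proper $Y_i < X_i$, that the rank of $X_i$ strictly increases, so that $M' \ne M$. Every term $r_N(B) + r_i'(B) + \dim X_i - \dim B$ of the induced-rank minimum at $A = X_i$ (now with $r_i'$ the rank of the rank-$1$ matroid with loop space $Y_i$) is at least $r(X_i) + 1$: the cases $B = X_i$ and $B \not\le Y_i$ are immediate from $r_i'(B) = 1$, while the remaining case $B \le Y_i$ is where cyclicity enters. Choosing a hyperplane $W$ of $X_i$ with $Y_i \le W$, cyclicity gives $r_N(W) = r_N(X_i)$, and then $r_N(B) \ge r_N(W) - (\dim W - \dim B)$ yields the bound. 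Hence $r'(X_i) \ge r(X_i) + 1 > r(X_i)$. Combining the reduction with both halves of the key claim gives that $\mathcal{X}$ is minimal iff every $X_i$ is cyclic.
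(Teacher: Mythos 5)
Your route is genuinely different from the paper's, and most of it is sound, but the ``not cyclic $\Rightarrow$ shrinkable'' direction has a real gap: you conflate the inducing sum $g=\sum_{j\neq i}r_j$ with the induced rank function $r_N$ of $N=\bigvee_{j\neq i}M_j$. Your minimizers are taken with respect to $f=\sum_j r_j$, and on subspaces of $X_i$ we have $f=g$; so to conclude that $B\wedge W$ is ``again a minimizer'' you need $g(B\wedge W)\leq g(B)-1$. What your submodularity computation actually establishes is $r_N(B\wedge W)=r_N(B)-1$, which is a statement about $r_N$, not about $g$: we only know $g\geq r_N$, possibly strictly, and the difference $g(B)-g(B\wedge W)$ counts the indices $j\neq i$ with $B\not\leq X_j$ but $B\wedge W\leq X_j$, which nothing in your argument prevents from being zero. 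So the inference ``hence $B\wedge W$ is again a minimizer and is not bumped'' is a non sequitur as written. The repair is one you already use in your other direction: work with the two-step union. By the paper's remark that unions can be built iteratively, $M=N\vee M_i$ is induced by $h=r_N+r_i$ and the shrunk system gives $M'=N\vee M_i'$, induced by $h'=r_N+r_i'$. If $B$ is an $h$-minimizer for $A$ with $B\leq X_i$ and $B\not\leq W$, then $h(B)=r_N(B)$, and your computation gives $h'(B\wedge W)=r_N(B\wedge W)=h(B)-1$ while $\dim(B\wedge W)=\dim B-1$; so $B\wedge W$ attains the same value $r(A)$ in the $h'$-minimum, whence $r'(A)\leq r(A)$, and $h'\geq h$ gives the reverse inequality. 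With that substitution this direction is correct; your reduction to single-coordinate shrinking and your ``cyclic $\Rightarrow$ not shrinkable'' direction (which already, and correctly, uses the $r_N$-based formula) are fine.

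Two further remarks. First, your definition of cyclic ($r(W)=r(X_i)$ for every codimension-$1$ subspace $W\leq X_i$) is not the one the paper's own proof runs on: the paper implicitly takes ``$X_i$ cyclic'' to mean that $X_i$ is the join of the circuits below it (its ``if'' direction picks a circuit $C\leq X_j$ with $C\not\leq Y_j$, and its ``only if'' direction shrinks $X_j$ to the join of all circuits below it). The two notions are equivalent for $q$-matroids, but the equivalence is itself a lemma: one direction is essentially your hyperplane/submodularity computation, while the other requires taking a matroid basis $B$ of a rank-preserving hyperplane $W$, adjoining a one-dimensional $v\leq X_i$ with $v\not\leq W$, and arguing that the resulting dependent space contains a circuit which would be forced inside $B$. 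Since the paper never defines ``cyclic,'' you should prove this equivalence or at least state which notion you use. Second, for comparison: the paper's circuit argument handles an arbitrary coordinatewise shrink in one step (so it needs no reduction lemma) and never touches the rank formula of the union, whereas your argument trades circuits for rank and hyperplane bookkeeping; once the $g$-versus-$r_N$ issue is fixed, that is a legitimately different and self-contained proof.
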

We know by \cref{th:q-transversal-flats} that each $X_i$ is flat in $M$, so we could equally well assert that each $X_i$ is a cyclic flat in the statement of this theorem.
\begin{proof}
  Suppose first that each $X_i$ is cyclic.
  Consider a family ${\cal Y} = (Y_i \st i \in I)$ with $Y_i \leq X_i$ for $i \in I$, and suppose for some $j \in I$ we have $Y_j < X_j$.  Let $M'$ be the $q$-matroid of partial $q$-transversals of $\cal Y$.  As in previous proofs, let $r_i$ be the rank function for the rank-1 matroid with loop space $X_i$, and similarly let $r'_i$ be the rank function for the rank-1 matroid with loop space $Y_i$.  Then $M$ is induced by function $f(S) = \sum_{i \in I} r_i(S)$ and
  $M'$ is induced by function $f'(S) = \sum_{i \in I} r'_i(S)$.  We have $r_i(S) \leq r'_i(S)$ for $i \in I$ and $S \leq V$.

  As $X_j$ is cyclic and $Y_j$ is a proper subspace, there must be a circuit $C$ of $M$ with $C \leq X_j$ with $C \not\leq Y_j$.  For all $T < C$ we have $f'(T) \geq f(T) \geq \dim T$.  We also have $r_j(C) = 0$ and $r'_j(C) = 1$, so that $f'(C) \geq 1 + f(C) = \dim C$.  Thus $C$ is independent in $M'$, and $M' \not= M$.  As this holds for all choices of  $\cal Y$, we see that $\cal X$ is a minimal presentation for $M$, giving the ``if'' part of the theorem.

  Suppose now that some $X_j$ is not cyclic, and let $Y_j < X_j$ be the join of all circuits under $X_j$.  For $i \in I \setminus \{j\}$, let $Y_i = X_i$.  Then with rank functions defined as above, for all $S \leq V$ we have $f'(S) \geq f(S)$, so all sets independent in $M$ are also independent in $M'$.  If $C$ is any circuit of $M$, we also have $f'(C) = f(C)$, so that every dependent set of $M$ is dependent in $M'$.  So $M' = M$, and $\cal X$ is not a minimal presentation.
\end{proof}

\begin{conjecture}
  Minimal presentations are unique.
\end{conjecture}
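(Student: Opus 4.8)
The plan is to follow the classical route to uniqueness of the maximal presentation: show that the \emph{multiset} of subspaces appearing in a minimal presentation is an invariant of $M$, computable from its rank function. By the preceding theorem together with \cref{th:q-transversal-flats}, every minimal presentation of $M$ consists of cyclic flats, so I may record it by a multiplicity function $c_F \ge 0$, defined for each cyclic flat $F$ of $M$, where $c_F$ is the number of members equal to $F$ and $n = \sum_F c_F$. Writing $g(S) = \sum_{F \ge S} c_F$ for the number of members containing a subspace $S$, the inducing function of \cref{th:transversals-are-matroids} is $f(S) = n - g(S)$. It therefore suffices to prove that each $c_F$ is determined by $M$ (and $n$), which I would obtain by M\"obius inversion over the lattice of cyclic flats once the values $g(F)$ are known intrinsically.

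The crux is the following Key Lemma: for the function $f$ arising from a minimal presentation, $r(F) = f(F)$ for every cyclic flat $F$, so that $g(F) = n - r(F)$. Recall $r(F) = \min_{B \le F} f(B) + \dim F - \dim B$ and $r(F) \le f(F)$ trivially; I must show the minimum is attained at $B = F$. Using $f(F) - f(B) = g(B) - g(F) = \card{\{i \st X_i \ge B,\ X_i \not\ge F\}}$ for $B \le F$, the claim reduces to $\dim B + g(B) \le \dim F + g(F)$ for all $B \le F$; that is, each member $X_i$ containing $B$ but not $F$ must be charged to an independent direction of $F$ above $B$. This is exactly where cyclicity of $F$ enters: because $F$ is a cyclic flat, $M$ restricted to $F$ has no coloops, and I would turn a violation of the inequality into a $1$-dimensional subspace of $F$ lying in every member that meets $F$, hence a coloop of $M|F$, contradicting cyclicity. \cref{th:fundamental-circuit} and the submodularity of $f$ should supply the needed exchange step.

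Granting the Key Lemma, $g(F) = n - r(F)$ holds on the whole lattice $Z(M)$ of cyclic flats, so M\"obius inversion gives $c_F = \sum_{F' \ge F} \mu_{Z(M)}(F, F')\,(n - r(F'))$. Since $\sum_{F' \ge F} \mu_{Z(M)}(F, F') = 0$ unless $F$ is the unique maximal cyclic flat $\hat 1$, the dependence on $n$ cancels for every $F \ne \hat 1$, and those multiplicities are functions of the rank of $M$ alone. For $F = \hat 1$ one gets $c_{\hat 1} = n - r(\hat 1)$; and since a member is the loop space of a genuine rank-$1$ $q$-matroid, it must be a proper subspace, so when $\hat 1 = V$ (equivalently $M$ has no coloops) one has $c_V = 0$, forcing $n = r(V)$ and pinning down every multiplicity. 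Hence the multiset is determined by $M$, proving uniqueness.

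I expect the Key Lemma to be the main obstacle: translating ``$F$ is a cyclic flat'' into the tightness $r(F) = f(F)$ of the induced rank requires the right $q$-analog of the classical fact that, in a maximal presentation, the number of members avoiding a cyclic flat equals its corank. A secondary technical point is the behaviour when $M$ has coloops, where, exactly as in the classical theory, uniqueness must be read up to the choice of the multiplicity $c_{\hat 1}$ of the top cyclic flat, equivalently for a fixed number $n$ of members.
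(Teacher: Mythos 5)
First, a point of orientation: the paper does \emph{not} prove this statement --- it is posed as an open conjecture --- so there is no proof of record to compare against, and your argument must stand on its own. It does not, because your ``Key Lemma'' --- that for a minimal presentation with $n$ members one has $g(F) = n - r(F)$ for \emph{every} cyclic flat $F$ of $M$, where $g(F)$ is the number of members containing $F$ --- is exactly where the entire difficulty of the conjecture lives, and you leave it unproven. Your surrounding reduction is sound: members of a minimal presentation are cyclic flats (by the paper's characterization of minimal presentations together with \cref{th:q-transversal-flats}), so for a \emph{fixed} $n$ the multiplicities $c_F$ are recovered from $g$ by M\"obius inversion over the finite poset of cyclic flats, and the Key Lemma would make $g$ an invariant of $M$. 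But the classical analog of the Key Lemma (``exactly $\rho(F)$ sets of any presentation meet a cyclic flat $F$'') is proved by restricting to $F$ and invoking part~2 of \cref{th:presentation-properties}: a coloop-free transversal matroid has exactly $\rho(M)$ nonempty sets in every presentation. The restriction step does $q$-ify painlessly (for $S \leq F$ the value $f(S)$ is unchanged when each $X_i$ is replaced by $X_i \wedge F$, so $M$ restricted to $F$ is presented by the traces), but the paper proves no $q$-analog of part~2, and its classical proof does not transfer: classically one picks $x \in A_{r+1}$ outside a basis $B$ and notes $B \cup \{x\}$ is a partial transversal of the enlarged family, whereas in the $q$-setting independence of $B \vee a$ requires lower-bounding $f$ on \emph{all} subspaces of $B \vee a$, most of which are neither inside $B$ nor comparable to $a$. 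Your one-sentence plan (``turn a violation into a coloop of $M|F$'') also presupposes a theory of coloops and restrictions for $q$-matroids, and your cross-$n$ bookkeeping presupposes a top element $\hat 1$ of the cyclic-flat poset (i.e., that joins of cyclic subspaces are cyclic); none of this is developed in the paper. So what you have is a plausible reduction to a substantive missing theorem, not a proof.

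Second, your closing remark --- that uniqueness can only be expected for a fixed number $n$ of members --- is not a secondary technical point: it shows the conjecture is \emph{false} as literally stated, and your own formula $c_{\hat 1} = n - r(\hat 1)$ predicts the counterexamples. Take $V = \FF_q^2$, let $L$ be a $1$-dimensional subspace, and let $M$ be the rank-$1$ $q$-matroid with loop space $L$. Then $L$ is a circuit of $M$, hence a cyclic flat, and both $(L)$ and $(L, L)$ present $M$: any $1$-dimensional $T \neq L$ avoids $L$, while $V$ is not a partial $q$-transversal in either case because a basis $\{l, x\}$ with $0 \neq l \in L$ is not a (partial) avoiding transversal. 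Both presentations are minimal --- by the paper's characterization, since every member is cyclic, or directly, since shrinking any member to $\latticebottom$ produces a matroid of rank $2$. Here $\hat 1 = L$ and padding with copies of $\hat 1$ preserves both $M$ and minimality, exactly as your analysis suggests. So a correct resolution must first restate the conjecture (fix the index set, or assert uniqueness up to copies of the top cyclic flat), and then prove your Key Lemma; the latter --- in effect the $q$-analog of part~2 of \cref{th:presentation-properties} --- remains the genuine open problem.
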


\subsection{Representation}\label{sec:rep}

We can show that in a special case, where the subspaces $X_i$  align in a particular way, a $q$-transversal matroid is representable.

\begin{theorem} \label{th:q-rep-aligned}
  Let $V$ have basis $\{b_1, b_2, \dotsc, b_n\}$ and let ${\cal X} = (X_1, \dotsc, X_k)$ be a system of subspaces of $V$ where each of the $X_i$ has the form $X_i = \left< b_i \st i \in L_i \right>$ for some sets $L_i \subseteq \{1 \upto n \}$. Then the associated $q$-transversal matroid is representable.
\end{theorem}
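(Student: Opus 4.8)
\section*{Proof proposal}

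The plan is to represent $M$ by a single explicit matrix $G$ built from the coordinate structure of the $X_i$, working first over a rational function field and then descending to a finite field. Since $M$ is the union $M_1 \vee \cdots \vee M_k$ of the rank-$1$ $q$-matroids $M_i$ with loop space $X_i$, and $X_i = \linspan{b_l \st l \in L_i}$ is a coordinate subspace cut out by the functionals dual to the $b_l$ with $l \notin L_i$, I expect each $M_i$ to be representable by a single row. So I would introduce indeterminates $\alpha_{il}$ for all pairs $(i,l)$ with $l \notin L_i$, set $K_0 = \FF_q(\alpha_{il})$, and define the $k \times n$ matrix $G$ over $K_0$ by
\[ G_{il} = \begin{cases} \alpha_{il} & \text{if } b_l \notin X_i \text{ (i.e. } l \notin L_i), \\ 0 & \text{otherwise.} \end{cases} \]
Writing $r_G$ for the rank function of the $q$-matroid $G$ represents, the goal is to prove $r_G = r$, the rank function of $M$; representability over a finite extension then follows by specialization.

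One inequality is free and holds for every choice of the $\alpha_{il}$. Fix a subspace $W$ with a matrix $X$ whose rows form a vector basis, and fix any $A \leq W$. The rows of $GX^T$ indexed by $\{\, i \st A \not\leq X_i \,\}$ are the only ones that can be nonzero on the columns coming from $A$, since $A \leq X_i$ forces the $i$-th row of $G$ to vanish on $A$. Bounding the rank by the number of such rows plus the remaining columns gives $r_G(W) \leq f(A) + \dim W - \dim A$, and hence $r_G(W) \le r(W)$ by the rank formula for the induced $q$-matroid used in \cref{th:transversals-are-matroids}. For the reverse direction, observe that the $i$-th row of $GX^T$ equals $\sum_{l \notin L_i}\alpha_{il} c_l$, where $c_l$ is the $l$-th column of $X$; because distinct rows of $G$ involve disjoint indeterminates, the rank of $GX^T$ over $K_0$ equals the maximum rank over all specializations, which is exactly the largest number of linearly independent vectors that can be chosen one from each subspace of the family $\tilde U_i := \linspan{c_l \st l \notin L_i}$ in $K_0^{\dim W}$. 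That is, $r_G(W)$ over $K_0$ is the size of a largest linearly independent transversal of $(\tilde U_i)$.

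I would then identify this transversal rank with $r(W)$. By the classical rank formula for transversal matroids (the deficiency form of \cref{th:rado}, applied to the representable matroid on $K_0^{\dim W}$), this maximum equals $\min_J\bigl[\dim \sum_{i\in J}\tilde U_i + (k - \card J)\bigr]$. Now a vector of $W$ with coordinate vector $u$ (relative to $X$) lies in $X_i$ iff $u$ is orthogonal to every $c_l$ with $l \notin L_i$; taking the intersection over $i \in J$ and using $\bigcap_{i\in J}X_i = {\cal X}(J)$ yields
\[ \dim \textstyle\sum_{i \in J}\tilde U_i = \dim W - \dim\bigl(W \cap {\cal X}(J)\bigr). \]
Substituting, the transversal rank becomes $\min_J\bigl[\dim W - \dim(W \cap {\cal X}(J)) + k - \card J\bigr]$, which is precisely the value of the induced-rank formula for $r(W)$: the minimizing subspace $A$ may be taken to be $W \cap {\cal X}(J)$, exactly as in the proof of \cref{th:q-Hall}. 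Hence $r_G(W) = r(W)$ over $K_0$ for every $W$.

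Finally I would descend to a finite field. Because $\FF_q$ is finite there are only finitely many subspaces $W$; for each, fix an $r(W) \times r(W)$ minor of $GX^T$ that is a nonzero polynomial in the $\alpha_{il}$ (one exists since the generic rank equals $r(W)$). Their product is a nonzero polynomial over $\FF_q$, so for $s$ large it has a nonvanishing point in $\FF_{q^s}$. Specializing the $\alpha_{il}$ to such a point keeps every chosen minor nonzero, giving $r_G(W) \geq r(W)$ for all $W$; combined with the always-valid reverse inequality this yields $r_G = r$ over $K = \FF_{q^s}$, so $M$ is representable. I expect the main obstacle to be the identification of the generic rank with $r(W)$: the entries of $GX^T$ are $\FF_q$-linear forms in the $\alpha_{il}$ rather than independent indeterminates, so one cannot invoke ``generic matrices have maximal rank'' directly. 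The rows-in-subspaces viewpoint, together with the dimension count $\dim \sum_{i\in J}\tilde U_i = \dim W - \dim(W \cap {\cal X}(J))$, is what lets the classical transversal rank formula apply and makes the argument go through.
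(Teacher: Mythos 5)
Your proposal is correct, and although it builds essentially the same matrix as the paper --- a $k\times n$ matrix whose $i$th row is generic subject to being supported off $L_i$, so that $g_i\cdot v=0$ exactly when $v\in X_i$ --- the verification that this matrix represents $M$ is genuinely different. The paper uses powers $y_i^1,\dotsc,y_i^n$ of a single indeterminate per row where you use independent indeterminates $\alpha_{il}$, and it only examines subspaces $U$ of dimension exactly $k$: it shows $\det(GS^T)\neq 0$ iff $U$ is a $q$-transversal, by determinant expansion in one direction and, in the other, by extracting multinomial coefficients, applying \cref{th:rado} in its existence form to the column matroid of $S$, and finishing with \cref{th:is-q-transversal}. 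You instead prove that the rank function represented by $G$ agrees with $r$ on \emph{every} subspace $W$: the inequality $r_G(W)\le r(W)$ from the zero pattern alone (valid for every specialization of the $\alpha_{il}$), and the reverse by identifying the generic rank of $GX^T$ with the maximum rank of vectors chosen one from each of the column spans $\linspan{c_l \st l\notin L_i}$, evaluating that maximum by the defect form of Rado's theorem, and rewriting $\min_J\bigl[\dim W-\dim(W\cap{\cal X}(J))+k-\card J\bigr]$ as $r(W)$ exactly as in \cref{th:q-Hall}. Your route buys two things: rank equality on all subspaces removes any reliance on bases determining the $q$-matroid (the paper's basis-matching argument pins $M$ down only when a full $q$-transversal exists, so the degenerate case is left implicit), and your product-of-minors specialization turns the descent to a finite extension $\FF_{q^s}$ into a proof, where the paper only offers a closing remark; what the paper's one-indeterminate-per-row choice buys in exchange is economy, yielding its explicit degree-$n^k$ bound. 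Two points to tighten in your write-up: the defect form is not literally \cref{th:rado} as stated, though it is a standard corollary (adjoin $k-t$ elements, free in the matroid, to every set $A_i$); and you should invoke it on the finite ground set of columns $c_l$ with $A_i=\{c_l \st l\notin L_i\}$ rather than on the infinite linear matroid of $K_0^{\dim W}$ --- a transversal drawn from these spanning sets already achieves the min-formula value, and your easy inequality shows nothing larger is attainable, so the generic rank is squeezed to the right value.
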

\begin{proof}
Let $y_1, y_2, \dotsc, y_k$ be distinct variables; we will work in the fraction field $K = \FF_q(y_1, \dotsc, y_k)$.
Let $G$ be the $k \times n$ matrix with
\[ G_{ij} = \begin{cases} 0 & \text{if $j \in L_i$,}\\ y_i^j & \text{otherwise.} \end{cases} \]
and let $g_i$ be the $i$th row of $G$.
We have the property $g_i\cdot v = 0$ iff $v \in X_i$ for any vector $v \in V$ as the powers of $y_i$ are linearly independent over $\FF_q$.

Now let $U \leq V$ be a subspace of dimension $k$, and let $\{u_1, \dotsc, u_k\}$ be a basis for $U$.  Let $S$ be a $k \times n$ matrix where the $i$th row is are the coordinates of $u_i$ in basis $(b_1, \dotsc, b_n)$.
We will show that the rank of $H = GS^T$ is $k$ iff $U$ is a $q$-transversal.
Note that $H$ is a $k \times k$ matrix and so has rank $k$ iff it has nonzero determinant.  Note too that the choice of basis for $U$ is irrelevant, so we are free to change it if necessary for the proof.

  Suppose first that $U$ is not a $q$-transversal, and suppose that $\{u_1, \dotsc, u_k\}$ is one of the bases that is not an avoidance transversal of the $X_i$.
Consider some summand
  \[ (-1)^{\sgn(\sigma)} \prod_{i=1}^n h_{i,\sigma(i)} \]
  in the formula for the determinant of $H$, where $\sigma$ is a permutation.   As the $u_i$ are not an avoidance transversal, for some $i$ we have $u_{\sigma(i)} \in X_i$.
  So $h_{i,\sigma(i)} = g_i \cdot u_{\sigma(i)} = 0$, and the product is 0.  All of the summands are thus 0, and the determinant is 0; $H$ has rank less than $k$.

  Suppose now that the rank of $H$ is less than $k$; we will show that $U$ has a vector basis that is not an avoiding transversal.  The determinant of $H$ is assumed to be zero.  Consider the coefficient of the multinomial $\prod_{i=1}^k y_i^{e_i}$ for exponents $e_i \in \{1 \upto n\}$ in the expansion of $\det H$; as these multinomials are linearly independent over $\FF_q$ this coefficient must be zero.
When some $e_i \in L_i$ then as there is no occurrence of $y_i^{e_i}$ in $G$, the coefficient of $\prod_{i=1}^k y_i^{e_i}$ is 0.
  When $e_i \notin L_i$ for all $i$ we can compute the coefficient as $\det M$, where
    \[ M = \left(\begin{array}{cccc} S_{1,e_1} & S_{2,e_1} & \dots & S_{k,e_1} \\
      S_{1,e_2} & S_{2,e_2} & \dots & S_{k,e_2} \\
      \vdots & \vdots & \cdots & \vdots \\
      S_{1,e_k} & S_{2,e_k} & \dots & S_{k,e_k} \\
    \end{array} \right) \]
    as $y_i^{e_i}$ only appears from terms in the $i$th row of $G$.  The rows of this $M$ are formed from columns $\{e_1, e_2, \dotsc, e_k\}$ of matrix $S$.  

    Let $N$ be the matroid on $\{1 \upto n\}$ represented by matrix $S$ (so that a set $T \subseteq \{1 \upto n\}$ is independent iff the columns of $S$ indexed by $T$ are linearly independent), and let ${\cal A} = (A_1, A_2, \dotsc A_k)$, where $A_i = \{1 \upto n\} \setminus L_i$.
    Then by Rado's theorem, ${\cal A}$ has a transversal that is independent in $N$ iff for all $J \subseteq \{1 \upto k\}$ we have
    \begin{equation}\label{eq:rado-L}
      \rho(A[J]) \geq |J|.
    \end{equation}
    However, ${\cal A}$ cannot have a transversal that is independent in $N$: if $x_1, \dotsc, x_k$ were such a transversal with $x_i \in A_i$ for all $i$,  then the coefficient of  $\prod_{i=1}^k y_i^{x_i}$ equals the determinant of a non-singular matrix and would not be 0.  Thus there is some  $J \subseteq \{1 \upto k\}$ where \eqref{eq:rado-L} fails, that is,  $\rho(A[J]) < |J|$.
    Consider now the submatrix $R$ of $S$ formed by taking the columns with indices in $A[J]$.  As this has rank less than $|J|$, there is some invertible matrix $C$ so that $CR$ has fewer than $|J|$ nonzero rows, and thus at least
    $k - |J|$ zero rows. So $S' = CS$, which is another vector basis for $U$, has at least  $k - |J|$ rows that are
    zero for all the columns corresponding to elements of $A[J]$.  These rows are vectors that lie in ${\cal X}(J)$,
    so we have
    \[\dim  (U \cap {\cal X}(J) > k - |J|, \]
    and thus \cref{th:is-q-transversal} shows that $U$ is not a $q$-transversal of $\cal X$.
\end{proof}
This construction gives a representation over a transcendental extension of $\FF_q$.
Inspection of the proof shows that we could use a finite extension of degree $n^k$: if $\alpha$ is an element of degree $n^k$ over $\FF_q$ we can use $\alpha^{jn^i}$ instead of $y_{i}^j$ in the representation constructed in the proof.
In experiments it is possible to find representations over extensions of even smaller degree.  More refinement of this result is surely possible.

\begin{conjecture}
  All $q$-transversal matroids are representable.
\end{conjecture}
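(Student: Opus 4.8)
The plan is to generalize the construction of \cref{th:q-rep-aligned}, discarding the hypothesis that the $X_i$ are coordinate subspaces. Work over an infinite extension $K$ of $\FF_q$ (a transcendental extension, or by the remark following \cref{th:q-rep-aligned} a sufficiently large finite one). For each set $X_i$ of the presentation ${\cal X} = (X_1, \dotsc, X_k)$, fix an $\FF_q$-matrix $P_i$ whose kernel is exactly $X_i$, so that $P_i v = 0$ iff $v \in X_i$ for every $v \in V$. Set $g_i = w_i P_i$ for a row vector $w_i$ with entries in $K$, and let $G$ be the matrix with rows $g_1, \dotsc, g_k$. For $v \in X_i$ we have $g_i \cdot v = 0$ automatically, while for each of the finitely many $1$-dimensional $\FF_q$-subspaces not contained in $X_i$ the vector $P_i v$ is a nonzero $\FF_q$-vector, and $g_i \cdot v = w_i \cdot (P_i v)$ vanishes only on a proper $K$-hyperplane in the coordinates of $w_i$. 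Since $K$ is infinite we may choose the $w_i$ to avoid these finitely many hyperplanes, recovering the detection property $g_i \cdot v = 0 \iff v \in X_i$ for all $v \in V$ that drove the aligned case.

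It then remains to show that the $q$-matroid represented by $G$ is the $q$-transversal matroid $M$ of $\cal X$. Let $\Phi : V \otimes K \to K^k$ send $v$ to $(g_1 \cdot v, \dotsc, g_k \cdot v)$, and write $\ker_K g_i$ for the $K$-hyperplane $\{v \in V \otimes K \colonst g_i \cdot v = 0\}$, which contains $X_i \otimes K$; then $\ker \Phi = \bigcap_i \ker_K g_i$. If the rows of $S$ span a subspace $U \leq V$, the columns of $GS^T$ are the images $\Phi(u_\ell)$ of a vector basis of $U$, so the rank of $GS^T$ equals $\dim_K \Phi(U \otimes K) = \dim U - \dim_K\bigl(U \otimes K \cap \bigcap_i \ker_K g_i\bigr)$. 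Writing $n$ for the nullity function of $M$, the representation is therefore correct for every $U$—and hence the two rank functions, and the two $q$-matroids, agree—precisely when
\[ \dim_K\Bigl(U\otimes K \cap \bigcap_{i} \ker_K g_i\Bigr) = n(U). \]
The forward implication of \cref{th:q-rep-aligned}, that a subspace failing to be a $q$-transversal has defective rank, transfers verbatim, since it uses only the detection property and Hall's theorem (\cref{th:hall}); the real content is the exact evaluation of this intersection for $U$ of every dimension, not just the top-dimensional ones.

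Recall from the proof of \cref{th:transversals-are-matroids} that $M$ is induced by $f(Y) = k - \card{\{ i \st Y \leq X_i\}}$, so that $n(U) = \max_{Y \leq U}\bigl(\dim Y + \card{\{i \st Y \leq X_i\}} - k\bigr)$. The lower bound $\dim_K(\cdots) \geq n(U)$ needs no genericity: taking the maximizing $Y$ and setting $J = \{i \st Y \leq X_i\}$, the space $Y \otimes K$ lies in $\ker_K g_i$ for every $i \in J$, and intersecting with the remaining $k - \card J$ hyperplanes lowers the dimension by at most $k - \card J$, leaving a subspace of $U \otimes K \cap \bigcap_i \ker_K g_i$ of dimension at least $\dim Y - (k - \card J) = \dim Y + \card J - k$.

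The matching upper bound is the crux, and is exactly where the argument must become subtle. One wants each generic hyperplane $\ker_K g_i$ to cut the running intersection down by a full dimension except when it is \emph{forced} not to, namely when that intersection already lies inside $X_i \otimes K$; equivalently, one wants the $g_i$ to exhibit no accidental linear dependence beyond those imposed by the subspaces $X_i$. Making this precise requires choosing the entries of the $w_i$ algebraically independent over $\FF_q$ and arguing by specialization that the relevant determinantal minors attain their generic (maximal) rank. In the aligned case this is exactly what the explicit coefficient extraction accomplishes: the coefficient of each monomial $\prod_i y_i^{e_i}$ in the determinant is a clean minor of $S$, and Rado's theorem (\cref{th:rado}) converts the non-vanishing of some such minor into the matroid condition. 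For general $X_i$ the entries of $G$ are arbitrary $K$-combinations of coordinates rather than single monomials, the coefficient bookkeeping no longer isolates minors of $S$, and it is not clear how to invoke Rado's theorem, or any substitute, to certify that no accidental degeneracy enlarges the intersection. Surmounting this obstruction is precisely what a proof of the conjecture demands, and is the reason the statement appears here only as a conjecture.
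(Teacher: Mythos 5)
This statement is a \emph{conjecture} in the paper: the author offers no proof of it, so there is no paper argument to compare yours against, and your submission, by its own closing admission, is not a proof either. To give credit where due, the parts you actually establish are correct: choosing $g_i = w_i P_i$ with $w_i$ generic over an infinite extension $K$ of $\FF_q$ does recover the detection property $g_i \cdot v = 0 \iff v \in X_i$ for the finitely many $\FF_q$-points $v$; the identification $\operatorname{rank}(GS^T) = \dim U - \dim_K\bigl(U \otimes K \cap \bigcap_i \ker_K g_i\bigr)$ is right; and your lower bound $\dim_K\bigl(U \otimes K \cap \bigcap_i \ker_K g_i\bigr) \geq n(U)$, obtained by intersecting the maximizing $Y \otimes K$ with the $k - |J|$ unforced hyperplanes, is a clean generalization of the easy direction of \cref{th:q-rep-aligned} that needs no genericity at all. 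This much shows the represented rank is always at most the $q$-transversal rank.

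The gap is the reverse inequality, and you say so yourself: you need that generic $w_i$ produce \emph{no} degeneracies beyond the forced ones, i.e.\ $\dim_K\bigl(U \otimes K \cap \bigcap_i \ker_K g_i\bigr) \leq n(U)$ for every $U$. Declaring the entries of the $w_i$ algebraically independent does not by itself deliver this; one must exhibit, for each $U$, some polynomial witness (a minor, a coefficient, anything) that is nonzero for at least one specialization, and that is precisely the role the monomial structure of $G$ played in the aligned case, where the coefficient of $\prod_i y_i^{e_i}$ in $\det H$ is literally a minor of $S$ and Rado's theorem (\cref{th:rado}) converts the simultaneous vanishing of all such minors into the transversal condition. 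When the $X_i$ are not coordinate subspaces that bookkeeping collapses: the entries of $G$ are general $K$-combinations of coordinates, coefficients of $\det H$ mix minors of $S$, and no substitute certificate is produced. Identifying this obstruction accurately is not the same as overcoming it, so your argument leaves the statement exactly where the paper leaves it --- as an open conjecture, with the aligned case of \cref{th:q-rep-aligned} as the best known evidence.
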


\section{Conclusions}

We have shown a definition of a $q$-transversal that gives reasonable analogs of the main theorems of conventional transversal theory.  In a few cases analogs are only conjectured, leaving room for future work to find proofs of these properties.  Furthermore, 
we know that the class of transversal matroids is not closed under taking minors, and the smallest minor-closed class containing the transversal matroids are the gammoids.  What is the $q$-analog of that class?

\bibliographystyle{plain}
\bibliography{transversals}

\end{document}